\newtheorem{theorem}{Theorem}[section]
\newtheorem{corollary}[theorem]{Corollary}
\newtheorem{lemma}[theorem]{Lemma}
\theoremstyle{definition}
\newtheorem{remark}[theorem]{Remark}
\numberwithin{equation}{section}
\newcommand{\Cn}{\mathbb{C}^n}
\newcommand{\C}{\mathbb{C}}
\newcommand{\Z}{\mathbb{Z}}
\DeclareMathOperator{\IDA}{IDA}
\DeclareMathOperator{\VDA}{VDA}
\DeclareMathOperator{\BDA}{BDA}
\DeclareMathOperator{\BMO}{BMO}
\DeclareMathOperator{\IMO}{IMO}
\DeclareMathOperator{\rank}{rank}
\DeclareMathOperator{\linspan}{span}
\begin{document}

\author{Zhangjian Hu}
\address{Department of Mathematics, Huzhou University, Huzhou, Zhejiang, China}
\email{huzj@zjhu.edu.cn}

\author{Jani A. Virtanen}
\address{Department of Mathematics and Statistics, University of Reading, Reading, England}
\email{j.a.virtanen@reading.ac.uk}
\address{Department of Mathematics and Statistics, University of Helsinki, Helsinki, Finland}
\email{jani.virtanen@helsinki.fi}

\title[Schatten class Hankel operators]{Schatten class Hankel operators on the Segal-Bargmann space and the Berger-Coburn phenomenon}

\keywords{Schatten class, Hankel operator, Segal-Bargmann space}

\subjclass{Primary 47B35, 47B10; Secondary 32A25, 32A37}

\thanks{Z.~Hu was supported in part by the National Natural Science Foundation of China (12071130, 12171150) and J.~Virtanen was supported in part by Engineering and Physical Sciences Research Council grant EP/T008636/1.}

\begin{abstract}
We give a complete characterization of Schatten class Hankel operators $H_f$ acting on weighted Segal-Bargmann spaces $F^2(\varphi)$ using the notion of integral distance to analytic functions in $\mathbb{C}^n$ and H\"ormander's $\bar\partial$-theory. Using our characterization, for $f\in L^\infty$ and $1<p<\infty$, we prove that $H_f$ is in the Schatten class $S_p$ if and only if $H_{\bar{f}}\in S_p$, which was previously known only for the Hilbert-Schmidt class $S_2$ of the standard Segal-Bargmann space $F^2(\varphi)$ with $\varphi(z) = \alpha |z|^2$.
\end{abstract}

\maketitle

\section{Introduction and main results}

We denote by $F^2$ the (classical) Segal-Bargmann space of Gaussian square-integrable entire functions on $\C^n$ and let $P$ be the orthogonal projection of $L^2$ onto $F^2$. For a bounded function $f$, the Hankel operator
$$
	H_f = (I-P)M_f
$$
is a bounded linear operator on $F^2$, where $M_fg = fg$ is the multiplication operator and $I$ is the identity operator. In addition to $F^2$, which is a central subject in quantum physics, we also consider Hankel operators on weighted Segal-Bargmann spaces. Besides intrinsic interest in operator theory,  the treatment of weighted spaces may be useful to understand more complicated quantum phenomena.

A unique feature in the theory of the Segal-Bargmann space is the property that the Hankel operator $H_f$ is compact if and only if $H_{\bar f}$ is compact when $f\in L^\infty$. This result of Berger and Coburn~\cite{BC1987} is not true for Hankel operators on the Bergman space or the Hardy space. In~\cite{Zh12}, Zhu conjectured that a possible explanation for this difference is the lack of bounded analytic functions on the entire complex plane, which was indeed confirmed recently in~\cite{HaVi20}. A natural question then arises as to whether an analogous phenomenon holds true for Hankel operators in the Schatten classes $S_p$ (defined in Section~\ref{Schatten classes}). This was answered in the affirmative for the Hilbert-Schmidt class $S_2$ by Bauer~\cite{Bauer} while Xia and Zheng~\cite{XZ2004} stated that the remaining cases ``appear to be rather challenging.''
In this paper (see Theorem~\ref{Berger-Coburn Schatten}), we prove that $H_f \in S_p$ if and only if $H_{\bar f}\in S_p$ when $1<p<\infty$, which we refer to as the Berger-Coburn phenomenon on $S_p$.

Bauer's proof in~\cite{Bauer} is elementary and similar to Stroethoff's characterization of compact Hankel operators in~\cite{St92}. The proof further depends heavily on the properties of the Hilbert-Schmidt norm, which makes Bauer's techniques unsuitable for the other Schatten classes. Instead, our approach is based on the characterization of the Schatten class membership of single Hankel operators, given in Theorem~\ref{S-p-criteria}, which curiously had not been established before despite analogous results for Hankel operators on the Bergman space due to Luecking~\cite{Lu92}.

While Theorems~\ref{S-p-criteria} and~\ref{Berger-Coburn Schatten} are new even in $F^2$, we prove them for Hankel operators on weighted Segal-Bargmann spaces $F^2(\varphi)$, which include all standard Segal-Bargmann and Fock-Sobolev spaces defined below. Regarding terminology, we note that $F^2(\varphi)$ are also referred to as Fock spaces and Bargmann-Fock spaces.

\subsection{Main results}
The weighted Segal-Bargmann space $F^2(\varphi)$ consists of all entire functions $f : \C^n \to \C$ that belong to $L^2(\varphi) = L^2(\C^n, e^{-2\varphi}dv)$, where $\varphi:\C^n\to \C$ is a suitable weight and $dv$ is the Lebesgue measure on $\C^n$.

In this paper, we consider real-valued weights $\varphi\in C^2(\C^n)$ that satisfy the property that there are two positive constants $m$ and $M$ such that
\begin{equation}\label{weights-a}
	m\omega_0 \le \mathrm{i} \partial \overline{\partial} \varphi \le M\omega_0
\end{equation}
in the sense of currents,  where $\omega_0 = \mathrm{i} \partial \overline{\partial}  |z|^2 $ is the Euclidean-K\"{a}hler form on $\C^n$. The expression (\ref{weights-a}) is also denoted by $\mathrm{i} \partial \overline{\partial} \varphi \simeq \omega_0$ and it simplifies to the form $m \le \Delta \varphi \le M$, where $\Delta\varphi$ is the Laplacian of $\varphi$, when $n=1$. Notice that the standard weights $\varphi(z) = \alpha|z|^2$ with $\alpha>0$ (see, e.g.,~\cite{Zh12}) satisfy~\eqref{weights-a} and each Fock-Sobolev space $F^{2,m}$  consisting of entire functions $f$ for which $\partial^{\alpha} f \in F^2 $ for all multi-indices $|\alpha|\le m$ (see~\cite{CZ2012}) can also be viewed as a $F^2(\varphi)$ with some $\varphi$ satisfying \eqref{weights-a}.

As above, we denote by $P$ the orthogonal (Bergman) projection of $L^2(\varphi)$ onto $F^2(\varphi)$. Let
$
	\Gamma = \linspan \{ K_z : z\in \C^n\},
$
where $K_z=K(\cdot, z)$ is the reproducing kernel of $F^2(\varphi)$ (see Section~\ref{reproducing kernel function}), and consider the class of symbols
$$
 	\mathcal S=\left \{f \textrm{ measurable on }\C^n: f g\in L^2(\varphi)\ \textrm{for}\ g \in \Gamma\right \}.
$$
Given $f \in \mathcal S$ and $g\in \Gamma$, the Hankel operator $H_f$ is well defined, and since $\Gamma$ is dense in $F^2(\varphi)$, it follows that $H_f = (I-P)M_f$ is densely defined on $F^2(\varphi)$. Notice that clearly $L^\infty\subset\mathcal{S}$.

To state our main results, we define
\begin{equation}\label{G-q-r}
	G_{r}(f)(z)=\inf_{h\in H(B(z,r))} \left(\frac{1}{|B(z,r)|}\int_{B(z,r)} |f-h|^2 dv \right)^{\frac{1}{2}}\quad (z\in \C^n)
\end{equation}
for $f\in L^2_{\mathrm{loc}}$ (the set of all locally square integrable functions on $\C^n$), where $H(B(z, r))$ is the set of all holomorphic functions on $B(z, r) = \{w\in\C^n : |z-w|<r\}$ and $|B(z, r)|=\int_{B(z, r)} dv$.

For $0< s\le \infty$, the space $\mathrm{IDA}^{s}$ (Integral Distance to Analytic Functions) consists of all $f\in L^2_{\mathrm {loc}}$ such that
\begin{equation}\label{IDA definition}
	\|f\|_{\mathrm{IDA}^{s}} =  \|  G_{r}(f)\|_{L^s}<\infty
\end{equation}
for some $r>0$. We write $\BDA$ for $\IDA^\infty$. The space $\mathrm{VDA}$  consists of all $f\in L^2_{\mathrm {loc}}$ such that
$
	\lim_{z\to \infty} G_{r}(f)(z)=0
$
for some $r>0$.

The spaces $\IDA^s$ with $s<\infty$ (and their generalizations $\IDA^{s, p}$ with the convention that $\IDA^s = \IDA^{s,2}$) were introduced in~\cite{HV20}, while the notion of bounded distance to analytic functions (BDA) was introduced by Luecking~\cite{Lu92} in the context of the Bergman space.

\medskip

We can now state our main results on Schatten class Hankel operators.

\begin{theorem} \label{S-p-criteria}
Let $0<p<\infty$ and suppose that $\varphi\in C^2(\Cn)$ is real valued with $\mathrm{i} \partial \overline{\partial} \varphi \simeq \omega_0$. Then for  $f\in \mathcal S$, the following statements are equivalent:
\begin{itemize}
\item[(A)] $H_f : F^2(\varphi) \to L^2(\varphi)$ is in $S_p$.

\item[(B)] $f\in \mathrm{IDA}^{p}$.

\item[(C)] $\int_{\Cn} \left\|H_f(k_z)\right\|^p dv(z)<\infty$, where $k_{ z}(\cdot)=\frac{K(\cdot,z)}{\sqrt{K(z,z)}}$ is the normalized reproducing kernel.
\end{itemize}
Furthermore,
\begin{equation}\label{bounded-g}
	\|H_f\|_{S_p}\simeq \left\|   f \right\|_{\mathrm{IDA}^{p}}\simeq \left\{\int_{\Cn} \left\|H_f(k_z)\right\|^p dv(z)\right\}^{\frac1p} .
\end{equation}
\end{theorem}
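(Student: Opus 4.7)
The plan is to adapt Luecking's Bergman-space strategy for Schatten-class Hankel operators to the Segal-Bargmann setting, with Bergman-theoretic tools replaced by H\"ormander's $\bar\partial$-theory and the weighted kernel estimates that are available when $\mathrm{i}\partial\overline{\partial}\varphi \simeq \omega_0$. I would establish the equivalences via the cycle (B) $\Rightarrow$ (A) $\Rightarrow$ (C) $\Rightarrow$ (B), together with the corresponding norm comparisons in~\eqref{bounded-g}.

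The implication (A) $\Rightarrow$ (C) is the softest. For $T \in S_p$ I would fix a sufficiently dense $r$-separated lattice $\{z_j\} \subset \C^n$ and use frame-type arguments for $\{k_{z_j}\}$ in $F^2(\varphi)$ to obtain a discrete estimate $\sum_j \|T k_{z_j}\|^p \lesssim \|T\|_{S_p}^p$. The passage to the continuous integral $\int_{\C^n} \|T k_z\|^p\, dv(z)$ is then a subharmonicity argument combined with the approximate invariance of the normalized kernels under small translations, itself a consequence of off-diagonal Gaussian estimates for the reproducing kernel $K$. For (C) $\Rightarrow$ (B), I would establish a pointwise inequality $G_r(f)(z) \lesssim \|H_f k_z\|_{L^2(\varphi)}$ for a small fixed $r$: since $P(fk_z)$ is entire and $k_z$ has no zeros near $z$, the quotient $P(fk_z)/k_z$ is holomorphic on $B(z,r)$, and the pointwise identity $(f - P(fk_z)/k_z)\,k_z = H_f k_z$ together with the lower bound $|k_z(w)|^2 e^{-2\varphi(w)} \gtrsim 1$ on $B(z,r)$ yields $G_r(f)(z)^2 |B(z,r)| \lesssim \|H_f k_z\|^2$. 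Integrating the $p$-th power then gives $\|f\|_{\IDA^{p}} \lesssim \bigl(\int_{\C^n} \|H_f k_z\|^p\, dv(z)\bigr)^{1/p}$.

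The hard implication is (B) $\Rightarrow$ (A). Given $f \in \IDA^{p}$, I would choose a locally finite cover of $\C^n$ by balls $B(z_j, 2r)$ of bounded overlap, a subordinate partition of unity $\{\chi_j\}$, and on each ball a near-minimiser $h_j$ of the infimum defining $G_{2r}(f)(z_j)$. The patched smooth approximant $h := \sum_j \chi_j h_j$ satisfies $\bar\partial h = \sum_j (\bar\partial \chi_j)(h_j - f)$, which is pointwise controlled by $G_{2r}(f)$ on overlap regions; H\"ormander's weighted $L^2$-estimate (available thanks to $\mathrm{i}\partial\overline{\partial}\varphi\simeq\omega_0$) then yields a $u$ with $\bar\partial u = \bar\partial h$ and the required weighted bounds. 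Hence $h-u$ is entire, $H_f = H_g$ with $g := f - h + u$, and $g$ has pointwise size essentially controlled by $G_{2r}(f)$. A lattice decomposition of $g$ combined with a Luecking-type rank-one atomic $S_p$-decomposition of $H_g$ then delivers the required Schatten bound $\|H_f\|_{S_p} \lesssim \|f\|_{\IDA^p}$.

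The most delicate range will be $0<p<1$: the failure of the triangle inequality in the $S_p$ quasi-norm forces us to assemble $H_f$ out of rank-one atoms whose $S_p$-sizes match the local $\IDA^{p}$-data cell by cell, with no room for sloppy summation. Balancing the H\"ormander correction against the atomic Schatten decomposition---so that the analytic patch and its $\bar\partial$-correction each contribute at most their fair share of $\|f\|_{\IDA^{p}}$, uniformly in $p$---is where the argument requires the most care.
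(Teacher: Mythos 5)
Your cycle (B)~$\Rightarrow$~(A)~$\Rightarrow$~(C)~$\Rightarrow$~(B) differs from the paper's, which proves (A)~$\Leftrightarrow$~(B) and (B)~$\Leftrightarrow$~(C) separately, and this difference matters: there is a genuine gap in your implication (A)~$\Rightarrow$~(C). You claim that for $T\in S_p$ and an $r$-separated lattice $\{z_j\}$ one has $\sum_j \|Tk_{z_j}\|^p \lesssim \|T\|_{S_p}^p$ by a ``frame-type argument''. This is correct for $p\ge 2$, but for $0<p<2$ the inequality goes the \emph{wrong} way for a general compact operator. Writing $\|Tk_z\|^2=\langle T^*Tk_z,k_z\rangle$ and using that $T^*T$ is positive with $\|T^*T\|_{S_{p/2}}=\|T\|_{S_p}^2$, the relevant Schatten inequality (Zhu, \emph{Operator theory in function spaces}, Thm.~1.27 and its vicinity) says $\sum_j\langle T^*Te_j,e_j\rangle^{p/2}\ge\|T^*T\|_{S_{p/2}}^{p/2}$ for any orthonormal $\{e_j\}$ when $p/2\le 1$, and counterexamples show the upper bound can fail badly (e.g.\ a unit vector $w$ spread so that $|\langle k_{z_j},w\rangle|\approx M^{-1/2}$ on $M$ lattice sites: $\sum_j|\langle k_{z_j},w\rangle|^2\approx 1$ but $\sum_j|\langle k_{z_j},w\rangle|^p\approx M^{1-p/2}\to\infty$). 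No generic frame or subharmonicity argument repairs this; one must exploit the \emph{Hankel} structure of $T=H_f$. This is exactly why the paper proves (A)~$\Rightarrow$~(B) directly, via the sandwich $AH_fT=Y+Z$ with $A,T$ bounded operators built from rank-one pieces, treating $0<p\le 1$ (using Luecking's Lemma~5 from~\cite{Lu87} for the off-diagonal part $Z$) and $1\le p<\infty$ (using Theorem~1.27 of~\cite{Zh07} on the positive operator $B^*M_{\chi_{B(a,r)}}H_fT$) separately, and only then deduces (C) from (B).

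A second, more minor, imprecision appears in your (B)~$\Rightarrow$~(A). After building the patched approximant $h=\sum_j\chi_jh_j$ and solving $\bar\partial u=\bar\partial h$ via H\"ormander, you write that $g:=f-h+u$ has ``pointwise size essentially controlled by $G_{2r}(f)$''. But H\"ormander's theorem (Lemma~\ref{Hormander}) gives an $L^2(\varphi)$ bound on $u$, not a pointwise one, and in general no pointwise bound is available. The paper sidesteps this: it keeps the decomposition $f=f_1+f_2$ with $f_1$ the smooth patch and $f_2=f-f_1$, and instead of estimating a symbol pointwise it proves operator bounds $\|H_{f_1}g\|\lesssim\|g\,\bar\partial f_1\|$ (using H\"ormander on the $(0,1)$-form $g\,\bar\partial f_1$) and $\|H_{f_2}g\|\le\|gf_2\|$, then reduces the Schatten membership of the corresponding multiplication operators $M_\phi$ (with $\phi=|\bar\partial f_1|$ or $|f_2|$) to the Schatten class Toeplitz theory via $M_\phi^*M_\phi=T_{|\phi|^2}$ and Theorem~\ref{Sp-Toep}. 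Your appeal to a ``Luecking-type rank-one atomic $S_p$-decomposition of $H_g$'' is too vague to assess, and the pointwise control it would need is not available. Your (C)~$\Rightarrow$~(B) step, by contrast, is essentially the paper's key estimate~\eqref{key-estimate} and is fine.
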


\begin{theorem} \label{Berger-Coburn Schatten}
Suppose $\varphi\in C^2(\C^n)$ is  real valued,  $\mathrm{i} \partial \overline{\partial} \varphi \simeq \omega_0$, and $1< p<\infty$.   Then for $f\in L^\infty$,  $H_f\in {S_p}$ if and only if $H_{\bar f} \in {S_p}$ with the $S_p$-norm estimate
\begin{equation}\label{S-p-norm}
	\|H_{\bar f}\|_{S_p}\le C  \|H_{ f }\|_{S_p},
\end{equation}
where the constant $C$ is independent of $f$.
\end{theorem}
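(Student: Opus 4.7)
The plan is to derive Theorem~\ref{Berger-Coburn Schatten} from Theorem~\ref{S-p-criteria}. By part~(C), $\|H_f\|_{S_p}^p\simeq\int_{\Cn}\|H_f k_z\|^p\,dv(z)$, so it suffices to prove the integrated estimate
\begin{equation*}
\int_{\Cn}\|H_{\bar f}k_z\|^p\,dv(z)\le C\int_{\Cn}\|H_f k_z\|^p\,dv(z)
\end{equation*}
for $f\in L^\infty$ and $1<p<\infty$.

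The main ingredient is a cross-diagonal kernel identity. Using the integral representation $H_g k_z(w)=\int(g(w)-g(u))k_z(u)K(w,u)e^{-2\varphi(u)}\,dv(u)$, which follows from the reproducing property of $K$, together with the composition rule $\int K(w,u)K(u,z)e^{-2\varphi(u)}\,dv(u)=K(w,z)$, one derives
\begin{equation*}
\sqrt{K(z,z)}\,H_{\bar f}k_z(w)=\sqrt{K(w,w)}\,\overline{H_f k_w(z)}-(\bar f(z)-\bar f(w))K(w,z).
\end{equation*}
Recognising the quantity $H_{\bar f}k_z(w)+(\bar f(z)-\bar f(w))k_z(w)$ as the holomorphic component of $(\bar f(z)-\bar f)k_z$ in $F^2(\varphi)$ and taking $L^2(\varphi)$-norms in $w$, one obtains
\begin{equation*}
\|H_{\bar f}k_z\|^2=J_2(z)-A(z),\qquad\|H_f k_z\|^2=J_2(z)-A'(z),
\end{equation*}
where $J_2(z)=\int|f(z)-f(w)|^2|k_z(w)|^2 e^{-2\varphi(w)}dv(w)$ is a Berezin mean oscillation, $A(z)=K(z,z)^{-1}\int K(w,w)|H_f k_w(z)|^2 e^{-2\varphi(w)}dv(w)$, and $A'$ is the same expression with $H_f$ replaced by $H_{\bar f}$. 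In particular, $\|H_{\bar f}k_z\|^2\le\|H_f k_z\|^2+A'(z)$.

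The remaining task is to bound $\int A'(z)^{p/2}dv(z)$ by $\int\|H_f k_w\|^p dv(w)$. I would combine the Christ/Delin-type off-diagonal decay $|K(z,w)|^2 e^{-2\varphi(z)-2\varphi(w)}\lesssim e^{-c|z-w|^2}$ available under~\eqref{weights-a} with a weighted submean-value estimate applied to the holomorphic component $P(\bar f k_w)=\bar f k_w-H_{\bar f}k_w$. Because $f$ is bounded, this produces a Gaussian localisation in $|z-w|$ whose coefficient involves a local average of $\|H_f k_u\|^2$ over $u$ near $z$, plus a secondary term controlled by $G_r(f)^2$ via Theorem~\ref{S-p-criteria}(B). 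For $p\ge 2$, Fubini, Young's convolution inequality, and an absorption step close the argument. For $1<p<2$, I would use Schatten duality $S_p^*=S_{p'}$ together with the dual form of the kernel identity (swap $z\leftrightarrow w$) to transport the bound to the remaining range.

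The main obstacle is the submean-value step: the pointwise bound on $|H_{\bar f}k_w(z)|^2$ carries a residual $\|f\|_\infty$-dependence that is not directly absorbed by $\|H_f\|_{S_p}$. Extracting an $L^{p/2}$-integrable estimate requires a careful local-averaging argument exploiting the gauge freedom $H_{\bar f+g}=H_{\bar f}$ for holomorphic $g$, whose global content is trivial on $\Cn$ but whose use on balls replaces $\|f\|_\infty$ by a local oscillation controlled by $G_r(f)$. This is the technical heart of the proof and is precisely where the hypothesis $f\in L^\infty$ is essential, matching the boundedness requirement in the compactness case~\cite{HaVi20}.
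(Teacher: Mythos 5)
Your proposed route is genuinely different from the paper's, but it has a circularity at its heart that the sketch does not resolve. The kernel identity you derive and the resulting formula $\|H_{\bar f}k_z\|^2 = J_2(z)-A(z)$ are correct. However, after rewriting $\|H_{\bar f}k_z\|^2 \le \|H_f k_z\|^2 + A'(z)$, the auxiliary term $A'(z)$ is built from $|H_{\bar f}k_w(z)|^2$, not from $H_f$. Your stated ``remaining task'' is therefore to bound $\int_{\Cn} A'(z)^{p/2}dv(z)$ by $\int_{\Cn}\|H_f k_w\|^p dv(w)$, which is essentially the inequality you are trying to prove. Indeed, for $p=2$ Fubini and \eqref{K-estimate} give $\int A'\simeq\int\|H_{\bar f}k_w\|^2\,dv(w)$, so substituting back yields only $\int\|H_{\bar f}k_z\|^2\le\int\|H_f k_z\|^2 + C\int\|H_{\bar f}k_z\|^2$ with a constant $C$ that is not small; there is no room to absorb. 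For $p\ne 2$ the situation is worse: $\int (A')^{p/2}$ is a mixed $L^p_z L^2_w$ norm while $\int\|H_{\bar f}k_w\|^p\,dv(w)$ is $L^p_w L^2_z$, and Minkowski's integral inequality only goes one way, so the two quantities are not even comparable in the needed direction. The closing steps (``Young's inequality, absorption, Schatten duality'') are aspirational and do not touch this circularity. A secondary issue: you invoke a Gaussian off-diagonal bound $|K(z,w)|^2e^{-2\varphi(z)-2\varphi(w)}\lesssim e^{-c|z-w|^2}$, but under the hypothesis \eqref{weights-a} only the single-exponential decay $e^{-\theta|z-w|}$ of Lemma~\ref{basic-est} is available (and the quadratic decay is known to fail generically, cf.~\cite{Ch91}); any argument that genuinely needs Gaussian decay will not extend to the weights in this theorem.

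For comparison, the paper works through the $\IDA^p$ characterization rather than the $\int\|H_f k_z\|^p\,dv$ characterization. It takes the decomposition $f=f_1+f_2$ from Theorem~\ref{IDA-space}, observes that $M_{2,r}(\overline{f_2})=M_{2,r}(f_2)$ handles $\overline{f_2}$ for free, and then analyses $\overline{\partial}\,\overline{f_1}=F+H$ with $F=\sum_j\overline{h_j}\,\overline{\partial}\psi_j$ and $H=\sum_j\psi_j\overline{\partial}\,\overline{h_j}$. The Ahlfors--Beurling operator (Lemma~\ref{partial-derivatives}) provides the inequality $\|\partial f_1\|_{L^p}\lesssim\|\overline{\partial} f_1\|_{L^p}$, which is where the hypotheses $1<p<\infty$ and $f\in L^\infty$ enter, and local Cauchy estimates plus plurisubharmonicity upgrade the pointwise bound to $M_{2,r/4}(\overline{\partial}\,\overline{f_1})\in L^p$ so that Theorem~\ref{S-p-criteria}(B)$\Rightarrow$(A) can be invoked for $\overline{f}$. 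The essential insight you are missing is that the asymmetry between $\partial$ and $\overline{\partial}$ is handled by a Calder\'on--Zygmund estimate applied to a controlled local modification of $f$, not by a direct kernel manipulation.
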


Notice that Theorem~\ref{Berger-Coburn Schatten} fails in general if the symbol $f$ is not bounded---see~\cite{Bauer} for an example.

\subsection{Outline and further results}
In the next section we provide preliminaries on the reproducing kernel, which includes global and local estimates, a consequence of H\"ormander's existence theorem, and we also extend the decomposition theorem in~\cite{HV20} for $\IDA$ functions. In Section~\ref{Schatten class Toeplitz operators}, we briefly introduce Toeplitz operators and state a description of their Schatten class properties. Section~\ref{boundedness and compactness} extends our recent results in~\cite{HV20} on boundedness and compactness of Hankel operators, and we compare them to the results of Stroethoff~\cite{St92} for {\it bounded symbols} in the setting of the classical Segal-Bargmann space $F^2$.

In Section~\ref{proof of S-p-criteria}, we prove our characterization of Schatten class Hankel operators using the decomposition theorem and other preliminary results, theory of Schatten class Toeplitz operators, and various estimates together with the general theory of Schatten class operators. As a consequence, when $\varphi(z) = \alpha|z|^2$,
we obtain a characterization in a familiar form that agrees with one of the main results in~\cite{Bauer} obtained previously only when $p=2$.

In Section~\ref{proof of Simul-S-p}, we recall briefly the results in~\cite{I2011, XZ2004} on the simultaneous membership in $S_p$ of the Hankel operators $H_f$ and $H_{\overline{f}}$ acting on $F^2$, and extend them to $F^2(\varphi)$ using Theorem~\ref{S-p-criteria}.

In Section~\ref{Berger-Coburn phenomenon}, we prove our result on the Berger-Coburn phenomenon using the Ahlfors-Beurling operator (to obtain the estimates $\| \partial f\|_{L^p} \lesssim \| \overline{\partial} f\|_{L^p}$) together with our characterization of Schatten class Hankel operators.

On the methodological level, it is worth noting that the previous techniques employed in~\cite{Bauer, BC1987, HaVi20, XZ2004}, such as the use of Weyl operators, limit operators or explicit formulas for the Bergman kernel, seem insufficient to obtain our characterizations even in setting of the classical Segal-Bargmann space $F^2$. In relation to the Bergman space $A^2$, Luecking~\cite{Lu92} gave a characterization for $H_f$ on $A^2$ of the unit disk to be in $S_p$ when $1\le p<\infty$, and further indicated that his proofs can be extended to handle any {\it bounded} strongly pseudoconvex domain in $\C^n$. Luecking's work and in particular the concept that he referred to as the ``bounded distance to analytic functions'' are of fundamental importance to our characterizations (see also~\cite{LL95} for bounded strongly pseudoconvex domains). 

While several aspects of the theory of Hankel operators on the Segal-Bargmann space are different from $A^2$ and $H^2$, there are also many similarities, such as the role of $\BMO$-type spaces and their decompositions, which we use to closely track the results for the Bergman space.

\section{Preliminaries}\label{preliminaries}

\subsection{The reproducing kernel function}\label{reproducing kernel function}

Let $\varphi \in C^2(\C^n)$ be a real-valued weight such that $i\partial\overline{\partial} \varphi \simeq \omega_0$, see~\eqref{weights-a}. Most of the basic properties of $F^2(\varphi)$ can be derived from the following weighted Bergman inequality (see Proposition~2.3 of~\cite{SV12} for further details).

\begin{lemma}\label{submean-value}
For each $r>0$, there is a constant $C>0$ such that
$$
 	\left| f(z) e^{-\varphi(z)}\right|^2 \le C \int_{B(z, r)} \left| f(\xi) e^{-\varphi(\xi)}\right|^2 dv(\xi)
$$
for all $f\in F^2(\varphi)$.
\end{lemma}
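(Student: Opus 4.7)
The plan is to reduce the weighted sub-mean-value inequality to the classical sub-mean-value inequality for plurisubharmonic functions on Euclidean balls. The key point is that while $|fe^{-\varphi}|^2$ is not itself plurisubharmonic, a multiplicative correction by an explicit exponential factor makes it so, and the correction is uniformly bounded on a ball of radius $r$.

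Concretely, I would introduce the auxiliary function
$$
g(\xi) = |f(\xi)|^2 \, e^{-2\varphi(\xi) + 2M|\xi-z|^2}
$$
on $\Cn$. Its (extended-real) logarithm is $\log g = \log|f|^2 - 2\varphi + 2M|\xi - z|^2$. Since $f$ is entire, $\log |f|^2$ is plurisubharmonic (Poincar\'e--Lelong); the hypothesis $\mathrm{i}\partial\overline{\partial}\varphi \le M\omega_0$ yields $-2\mathrm{i}\partial\overline{\partial}\varphi \ge -2M\omega_0$; and $\mathrm{i}\partial\overline{\partial}(2M|\xi-z|^2) = 2M\omega_0$. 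Summing these in the sense of currents gives $\mathrm{i}\partial\overline{\partial}\log g \ge 0$, so $\log g$ is plurisubharmonic on $\Cn$, and hence so is $g = e^{\log g}$ (with the usual convention $e^{-\infty} = 0$ at zeros of $f$).

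Next I would invoke the standard sub-mean-value inequality: every plurisubharmonic function is subharmonic in the real sense (the real Laplacian being four times the trace of the Levi form), so $g$ satisfies
$$
g(z) \le \frac{1}{|B(z,r)|} \int_{B(z,r)} g(\xi)\, dv(\xi).
$$
At $\xi = z$ the exponential factor equals $1$, giving $g(z) = |f(z)|^2 e^{-2\varphi(z)}$; on $B(z,r)$ we have $e^{2M|\xi-z|^2} \le e^{2Mr^2}$, so that $g(\xi) \le e^{2Mr^2}|f(\xi)|^2 e^{-2\varphi(\xi)}$. Combining the two estimates yields the claimed inequality with a constant of the form $C = e^{2Mr^2}/|B(z,r)|$, depending only on $n$, $r$ and $M$, as required.

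The only real subtlety is making the currents computation rigorous at zeros of $f$, where $\log|f|^2 = -\infty$; this is handled by standard Poincar\'e--Lelong theory, in which $\mathrm{i}\partial\overline{\partial}\log|f|^2$ is a non-negative current (integration on $\{f=0\}$ with multiplicities), and $g$ remains continuous since $g = 0$ there. It is worth emphasising that only the upper bound $\mathrm{i}\partial\overline{\partial}\varphi \le M\omega_0$ is used here; the lower bound $m\omega_0 \le \mathrm{i}\partial\overline{\partial}\varphi$ plays no role in this particular lemma. An alternative, more hands-on route is to decompose $\varphi(\xi) = \mathrm{Re}\,P_z(\xi) + \tilde{\varphi}_z(\xi)$ on $B(z,r)$ with $P_z$ a holomorphic polynomial of degree at most two matched to $\varphi$ at $z$ and $\tilde{\varphi}_z$ uniformly bounded in terms of $r$ and $M$, and then apply the mean-value inequality to the entire function $fe^{-P_z}$; the currents argument above is cleaner and avoids the Taylor bookkeeping.
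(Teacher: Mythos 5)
Your proof is correct. The paper itself does not prove this lemma but simply cites Proposition~2.3 of Schuster--Varolin [SV12], so there is no internal argument to compare against; your plurisubharmonicity argument is a clean, self-contained derivation. The key observation---that the correction factor $e^{2M|\xi-z|^2}$ has $\mathrm{i}\partial\overline{\partial}$ equal to $2M\omega_0$, which dominates the negative contribution of $-2\varphi$, while $\log|f|^2$ contributes a nonnegative current by Poincar\'e--Lelong---is exactly the standard way to reduce weighted sub-mean-value estimates to ordinary subharmonicity, and the passage from $\log g$ psh to $g=e^{\log g}$ psh via convexity of the exponential handles the zero set of $f$ without extra work. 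The alternative you sketch (subtracting a degree-two pluriharmonic approximant $\mathrm{Re}\,P_z$ of $\varphi$ near $z$ and applying the mean-value property to the entire function $fe^{-P_z}$) is equally standard and is closer in spirit to how such estimates are often presented in the Fock-space literature, but as you say it involves more Taylor bookkeeping. Your remark that only the upper bound $\mathrm{i}\partial\overline{\partial}\varphi \le M\omega_0$ enters is accurate; the lower bound is used elsewhere in the paper (H\"ormander's $\overline{\partial}$-estimate, the kernel bounds of Lemma~2.2) but plays no role in this particular inequality.
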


It follows from the preceding lemma that for any $z\in\C^n$, the mapping $f\mapsto f(z)$ is a bounded linear functional on $F^2(\varphi)$ and hence there is a unique $K_z$ in $F^2(\varphi)$ which satisfies the reproducing property $f(z) = \langle f, K_z\rangle$ for all $f\in F^2(\varphi)$. The function $K_z$ is referred to as the reproducing kernel of $F^2(\varphi)$. It is often called the Bergman kernel.

Lemma~\ref{submean-value} also implies that  $F^2(\varphi)$ is a closed subspace of $L^2(\varphi)$. We denote by $P$ the orthogonal projection of $L^2(\varphi)$ onto $F^2(\varphi)$. Notice that
$$
	Pf(z) = \langle Pf, K_z\rangle = \int_{\C^n} f(w) K(z,w) e^{-2\varphi(w)}dv(w)
$$
for $f\in L^2(\varphi)$ and $z\in\C^n$.

If $\varphi(z) = \alpha|z|^2$ is a standard weight with $\alpha>0$, then it is easy to see that
$$
	\left|K(z,w)\right| e^{-\alpha|z|^2 - \alpha|w|^2} = e^{-\alpha|z-w|^2}
$$
for $z,w\in \C^n$. For the general weights $\varphi$ that we consider, this quadratic decay is known not to hold (even in dimension one), and it is, in fact, expected to be very rare (see~\cite{Ch91}). However, it turns out that the following estimates for the reproducing kernel will be sufficient for us.

\begin{lemma}\label{basic-est}
There exist positive constants $\theta$ and $C_1$, depending only on $n$, $m$ and $M$ such that
\begin{equation}\label{basic-est-a}
	\left|K(z,w)\right|\leq  C_1 e^{ \varphi(z)+\varphi(w)} e^{-\theta|z-w|}\,  \textrm{ for all }\, z, w\in{{\C}}^n,
\end{equation}
and there exists positive constants $C_2$ and $r_0 $ such that
\begin{equation}\label{basic-est-b}
	\left|K(z,w)\right|\geq C_2 e^{ \varphi(z)+\varphi(w)}
\end{equation}
for $z\in{{\C}}^n$ and  $w\in B\left(z, r_0\right)$.
\end{lemma}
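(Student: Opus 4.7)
The plan is to combine H\"ormander's weighted $L^{2}$-estimate for $\bar\partial$ with the sub-mean-value inequality of Lemma~\ref{submean-value}; off-diagonal kernel estimates of this kind under the curvature assumption $\mathrm{i}\partial\overline{\partial}\varphi\simeq\omega_{0}$ are by now standard (cf.~\cite{Ch91,SV12}), and I would largely follow that template. First I would dispose of the crude upper bound by applying Lemma~\ref{submean-value} to $f=K_{z}$ at the point $z$ itself: since $K(z,z)=\|K_{z}\|^{2}$, the inequality reads
\[
K(z,z)^{2}e^{-2\varphi(z)}=\bigl|K_{z}(z)e^{-\varphi(z)}\bigr|^{2}\le C\int_{B(z,r)}\bigl|K_{z}(\xi)e^{-\varphi(\xi)}\bigr|^{2}dv(\xi)\le C\,K(z,z),
\]
so $K(z,z)\le C_{0}e^{2\varphi(z)}$, and then $|K(z,w)|=|\langle K_{z},K_{w}\rangle|\le\sqrt{K(z,z)K(w,w)}\le C_{0}e^{\varphi(z)+\varphi(w)}$ by Cauchy--Schwarz. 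This already gives \eqref{basic-est-a} with $\theta=0$.

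To install the exponential factor, for each fixed $w$ I would choose a smooth regularisation $\rho_{w}\in C^{\infty}(\C^{n})$ of the distance $|\cdot-w|$ that coincides with $|\zeta-w|$ outside $B(w,1)$ and satisfies $|\mathrm{i}\partial\overline{\partial}\rho_{w}|\le C_{0}\omega_{0}$ uniformly in $w$. For $\theta>0$ small enough (depending only on $m$, $M$, $n$) the twisted weight $\tilde\varphi_{w}:=\varphi-\theta\rho_{w}$ still satisfies $\mathrm{i}\partial\overline{\partial}\tilde\varphi_{w}\simeq\omega_{0}$, and H\"ormander's theorem applied in the weight $\tilde\varphi_{w}+n\log|\cdot-w|^{2}$ (the log singularity forcing a peak at $w$, as in the standard Skoda/Ohsawa--Takegoshi-style construction, after multiplying a cutoff at $w$ by a holomorphic second-order jet correction of $\varphi$) produces an entire $g_{w}$ with $g_{w}(w)\asymp e^{\varphi(w)}$, $\|g_{w}\|_{F^{2}(\varphi)}\le C$, and the twisted bound $\int|g_{w}|^{2}e^{-2\varphi+2\theta\rho_{w}}dv\le C$. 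Feeding $g_{w}$ into the extremal/duality characterisation of $K(z,w)$ and invoking Lemma~\ref{submean-value} at $z$ to convert the twisted $L^{2}$-decay into a pointwise decay extracts the factor $e^{-\theta|z-w|}$ in~\eqref{basic-est-a}.

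For the lower bound~\eqref{basic-est-b}, the on-diagonal version $K(z,z)\ge ce^{2\varphi(z)}$ follows from the peak section $g_{z}$ constructed as above (with $\rho_{z}\equiv 0$): the extremal characterisation $K(z,z)=\sup\{|f(z)|^{2}:\|f\|_{F^{2}(\varphi)}\le 1\}$ yields $K(z,z)\ge|g_{z}(z)|^{2}/\|g_{z}\|^{2}\gtrsim e^{2\varphi(z)}$. To spread this to $w\in B(z,r_{0})$ for small $r_{0}$, a short continuity argument using Lemma~\ref{submean-value} applied to the holomorphic function $w\mapsto K(z,w)-K(z,z)$ shows that $|K(z,w)|$ cannot drop by more than a factor of two on a sufficiently small ball around $z$, uniformly in $z$, which combined with the on-diagonal lower bound gives~\eqref{basic-est-b}.

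The main obstacle is the off-diagonal decay in~\eqref{basic-est-a}: the twisted weight must be arranged so that $\theta$ is independent of $w$, which is exactly where the uniform lower curvature bound $\mathrm{i}\partial\overline{\partial}\varphi\ge m\omega_{0}$ is indispensable---without it, the perturbation $\theta\,\mathrm{i}\partial\overline{\partial}\rho_{w}$ could not be absorbed into the positivity of $\varphi$. Once the twist is set up and the on-diagonal bounds for the perturbed kernel are established, the remainder reduces to careful bookkeeping with H\"ormander's solvability estimate and Lemma~\ref{submean-value}.
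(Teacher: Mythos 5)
The paper does not actually prove Lemma~\ref{basic-est}: immediately after the statement it says that \eqref{basic-est-a} ``appeared in \cite{Ch91} for $n=1$ and in \cite{De98} for $n\ge 2$, while the inequality \eqref{basic-est-b} can be found in \cite{SV12}.'' So you are reconstructing a proof that the authors deliberately outsource to the literature. Your general template (sub-mean-value, H\"ormander in a twisted weight $\varphi-\theta\rho_w$, bookkeeping to pass from twisted $L^2$ decay to pointwise decay) is indeed the right framework used in those references, and your opening step $K(z,z)\le Ce^{2\varphi(z)}$ via Lemma~\ref{submean-value} plus Cauchy--Schwarz is fine.

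However, there is a genuine gap in the off-diagonal decay step, which is the heart of \eqref{basic-est-a}. You build a peak section $g_w$ at $w$ with $g_w(w)\asymp e^{\varphi(w)}$, $\|g_w\|\lesssim 1$, and a twisted bound $\int|g_w|^2e^{-2\varphi+2\theta\rho_w}\lesssim 1$, and then say that ``feeding $g_w$ into the extremal/duality characterisation of $K(z,w)$'' extracts the factor $e^{-\theta|z-w|}$. That step does not go through: the extremal characterisation $K(w,w)=\sup\{|f(w)|^2:\|f\|\le1\}$ only compares \emph{diagonal} values and $F^2(\varphi)$-norms, not twisted norms, and there is no duality formula expressing $K(z,w)$ as an extremum over competitors like $g_w$. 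Concretely, writing $h=g_w/g_w(w)$, the orthogonal decomposition $h=K_w/K(w,w)+(h-K_w/K(w,w))$ gives $\|h-K_w/K(w,w)\|\lesssim e^{-\varphi(w)}$, which is of the same size as $\|K_w/K(w,w)\|$ itself; so the twisted $L^2$ decay of $h$ does not transfer to $K_w$, and hence says nothing about $K(z,w)$ for $|z-w|$ large. The standard arguments in \cite{Ch91,De98,SV12} do not use peak sections for this; instead they estimate $K_z$ itself by solving $\bar\partial$ for data built from a cutoff of $K_z$ (or of a competitor $f$), applying H\"ormander in the \emph{shifted} weight $\varphi-\theta\rho_z$, and closing the loop with a small-$\theta$ absorption. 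That self-referential structure (the $\bar\partial$-data already involves $K_z$) is precisely what makes the $\theta$ uniform in $z$, and it is the missing idea here.

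The lower bound argument also has a problem in the last step. You invoke Lemma~\ref{submean-value} applied to $w\mapsto K(z,w)-K(z,z)$ to get a ``continuity'' statement, but Lemma~\ref{submean-value} controls a point value by nearby $L^2$-averages, not oscillation on a ball; applied at $z$ to a function vanishing at $z$ it gives $0\le 0$. (There is also a minor issue of holomorphy: $w\mapsto K(z,w)$ is anti-holomorphic; one should work with $K_z(w)=K(w,z)$.) What is actually needed, and what \cite{SV12} proves, is a gradient-type estimate: one approximates $\varphi$ on $B(z,r)$ by a pluriharmonic polynomial $\psi_z$ with $|\varphi-\psi_z|\le C$ on $B(z,r)$ (this uses $\mathrm{i}\partial\overline{\partial}\varphi\le M\omega_0$), and then applies Cauchy's estimate to the holomorphic function $K_z e^{-2h_z}$ (where $h_z$ is holomorphic with $\mathrm{Re}\,h_z=\psi_z$) to show that $|K_z(w)|e^{-\varphi(z)-\varphi(w)}$ cannot drop below $\tfrac12 K(z,z)e^{-2\varphi(z)}$ for $|w-z|<r_0$. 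Lemma~\ref{submean-value} alone does not deliver this.
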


The estimate \eqref{basic-est-a}~appeared in \cite{Ch91} for $n=1$ and in~\cite{De98} for $n\ge 2$, while the inequality \eqref{basic-est-b} can be found in~\cite{SV12}. Notice that Lemma~\ref{basic-est} implies that
\begin{equation}\label{K-estimate}
	K(z, z)\simeq e^{2\varphi(z)}, \  z\in \Cn.
\end{equation}
For  $z\in {{\C}}^n$, recall that we write
\begin{equation*}
	k_{ z}(\cdot)=\frac{K(\cdot,z)}{\sqrt{K(z,z)}}
\end{equation*}
for the normalized reproducing kernel.  Then
\begin{equation}\label{k-z-est}
	\left |k_z(\xi) e^{-\varphi(\xi)}\right| \le C e^{- \theta |z-\xi|}, \ \  \lim_{|z|\to \infty} k_z(\xi)=0
\end{equation}
uniformly in $\xi$ on compact subsets of $\Cn$, and
\begin{equation}\label{nor-bergman}
	\frac 1 C e^{ \varphi(z)} \le  \|K_z\|_{p, \varphi}  \le C  e^{ \varphi(z)}, \ \  \frac1C\le\|k_{z}\|_{p,\varphi}\le C
\end{equation}
for $z\in \C^n$. Here $\|\cdot\|_{p,\varphi}$ stands for the norm of $L^p(\varphi) = L^p(\C^n, e^{-p\varphi}dv)$ when $1\le p<\infty$, $\|f\|_{\infty, \varphi} = \|fe^{-\varphi}\|_{L^\infty}$, and we write $\|\cdot\|_{2,\varphi} = \|\cdot\|$ for simplicity throughout.

We record one more estimate that will be needed for our study of Hankel operators. For this purpose, denote by $L^2_{(0, 1)}(\varphi)$ the family of all $(0, 1)$-forms on $\Cn$ with coefficients in $L^2(\varphi)$.

\begin{lemma}[\textbf{H\"ormander}]\label{Hormander}
Suppose that $\varphi\in C^2(\Cn)$ is  real valued and  $\mathrm{i} \partial \overline{\partial} \varphi \simeq \omega_0$. Then there is a constant $C>0$ such that for every $\overline \partial$-closed $(0, 1)$-form $\omega\in L^2_{(0, 1)}(\varphi)$, there exists a solution $u$ of $\overline \partial u =\omega$ for which
$$
	\int_{\Cn} \left |ue^{-\varphi}\right |^2 dv\le C \int_{\Cn} \left |\omega e^{-\varphi}\right |^2 dv.
$$
\end{lemma}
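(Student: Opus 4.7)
The plan is to deduce the lemma as a direct application of H\"ormander's classical weighted $L^2$-existence theorem for $\bar\partial$, which is exactly the tool that the statement is named after. Recall that this theorem asserts: on a pseudoconvex open set $\Omega\subset\mathbb{C}^n$, if $\psi\in C^2(\Omega)$ is plurisubharmonic with $\mathrm{i}\partial\overline{\partial}\psi\ge c\,\omega_0$ in the sense of currents for some $c>0$, then for every $\bar\partial$-closed $(0,1)$-form $\omega$ with $\int_\Omega|\omega|^2 e^{-\psi}\,dv<\infty$ there exists a solution $u$ of $\bar\partial u=\omega$ with
$$
\int_\Omega |u|^2 e^{-\psi}\,dv \le \frac{1}{c}\int_\Omega |\omega|^2 e^{-\psi}\,dv.
$$

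To match this to our situation, I would take $\Omega=\mathbb{C}^n$ (trivially pseudoconvex) and set $\psi=2\varphi$, so that the weight $e^{-\psi}$ coincides with the weight $e^{-2\varphi}$ appearing in $L^2(\varphi)$ and $L^2_{(0,1)}(\varphi)$. The hypothesis $\mathrm{i}\partial\overline{\partial}\varphi\simeq \omega_0$ from~\eqref{weights-a} contains in particular the strict lower bound $\mathrm{i}\partial\overline{\partial}\varphi\ge m\,\omega_0$, hence $\mathrm{i}\partial\overline{\partial}\psi\ge 2m\,\omega_0$, so H\"ormander's theorem applies with $c=2m$ and delivers exactly the asserted estimate with the explicit constant $C=1/(2m)$, which depends only on $m$ (and therefore only on the structure constants in~\eqref{weights-a}), as required.

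The only real obstacle here is the proof of H\"ormander's theorem itself, which is based on the Bochner--Kodaira--Nakano identity for $\bar\partial$ and its $\psi$-twisted adjoint $\bar\partial^{\ast}_{\psi}$ on $(0,1)$-forms, a Hilbert-space duality argument producing the minimal-norm solution of $\bar\partial u=\omega$, and, on the whole of $\mathbb{C}^n$, a regularization step to accommodate a merely $C^2$ weight. This is entirely classical and uses nothing specific to the Segal--Bargmann setting, so I would simply invoke it rather than reprove it; indeed the lemma is explicitly labelled \emph{H\"ormander} in the statement, signalling that the intended proof is precisely this citation.
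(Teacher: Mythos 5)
Your proposal is correct and follows exactly the paper's argument: set $\Omega=\mathbb{C}^n$, note that $\mathrm{i}\partial\overline{\partial}\varphi\ge m\,\omega_0$ gives the lower bound $2m$ for the plurisubharmonicity of $2\varphi$, and then invoke H\"ormander's weighted $L^2$-existence theorem (the paper cites Theorem~2.2.1 of H\"ormander's 1965 paper). The explicit constant $C=1/(2m)$ you extract is a minor extra detail the paper does not spell out but is consistent with its claim.
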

\begin{proof}
Let $\Omega =\Cn$. The assumption $\mathrm{i} \partial \overline \partial \varphi (z)\ge m \,\mathrm{i} \partial \overline \partial |z|^2$ implies that $2m$ is a lower bound for the plurisubharmonicity of $2\varphi$. Now Theorem 2.2.1 of~\cite{Ho65} completes the proof.
\end{proof}

\subsection{Lattices and separated sets in ${\Cn}$}
 A sequence $\{w_j\}$ of distinct points in $\Cn$ is called separated if
$$
	\delta(\{w_j\})=\inf_{j\neq k}|w_j-w_k|>0.
$$
For $r>0$, we call a sequence $\{w_j\}$ in   $\Cn$ an $r$-lattice if $\bigcup_{j} B(w_j, r)=\Cn$ and $B(w_j, \frac{r}{2\sqrt{n}})\bigcap B(w_k, \frac r {2\sqrt{n}})= \emptyset$ whenever $j\neq k$. 

Given $r>0$ and $w_1\in \Cn$, set
\begin{equation}\label{lattice-c}
	\Lambda= \left \{w_1+ \frac r {\sqrt{n}} ( m+ \textrm i s): m, s  \in {\mathbb Z}^n \right \}.
\end{equation}
It is easy to see that $\Lambda$ is an $r$-lattice. For $K\in \mathbb N$ fixed, we write
$$
	\left\{ w_1+ (z_1, \ldots, z_n)\in \Lambda:  0
	\le  \mathrm {Re}\, z_j, \mathrm {Im}\, z_j <K \frac r {\sqrt{n}}\right\}
	= \left\{w_1, \ldots, w_{K^{2n}}\right\},
$$
and for $1\le k\le K^{2n}$,
$$
	\Lambda_k=  \left \{w_k+ K\frac r {\sqrt{n}} ( m+ \textrm i s): m, s \in {\mathbb Z}^n \right \}.
$$
Then
\begin{equation}\label{Omega-docomposition}
	\Lambda= \bigcup_{k=1}^{K^{2n}} \Lambda_k, \quad \Lambda_j\cap \Lambda_k
	= \emptyset \ \textrm{ if } \ j\neq k,\quad |a-b|\ge \frac {Kr}{\sqrt{n}}  \quad \textrm {if}\ a, b \in \Lambda_k\ {\rm and}\ a\ne b.
\end{equation}

For $f, e\in L^2(\varphi)$, the tensor product $f\otimes e$ as a rank one operator on $L^2(\varphi)$ is defined by
$$
	f\otimes e(g) = \langle g, e \rangle f, \ \ g\in L^2(\varphi).
$$

\begin{lemma}\label{tensor-product}
Given $r>0$, there is some constant $C>0$ such that if $ \Lambda $ is  a separated    set in $\Cn$ with $\delta (\Lambda)\ge r$     and if $\{e_a: a\in \Lambda\}$ is an orthonormal set in $L^2(\varphi)$, then
$$
	\left\|\sum_{a\in \Lambda} k_{a}\otimes e_a\right\|_{L^2(\varphi)\to L^2(\varphi)} \le C.
$$
\end{lemma}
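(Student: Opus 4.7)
The plan is to bound the operator norm by expanding $\|Tg\|^2$ for $T=\sum_{a\in\Lambda}k_a\otimes e_a$ and reducing the problem to the off-diagonal decay of the reproducing kernel together with the separation of $\Lambda$. For $g\in L^2(\varphi)$, set $c_a=\langle g,e_a\rangle$; by Bessel's inequality $\sum_{a\in\Lambda}|c_a|^2\le\|g\|^2$, and the definition of the rank-one operators gives
$$\|Tg\|^2=\left\langle\sum_{a}c_a k_a,\sum_{b}c_b k_b\right\rangle=\sum_{a,b\in\Lambda}c_a\overline{c_b}\langle k_a,k_b\rangle.$$
So it suffices to show the Gram-type matrix $[\langle k_a,k_b\rangle]_{a,b\in\Lambda}$ is bounded on $\ell^2(\Lambda)$.

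Next I would use the reproducing property and \eqref{K-estimate} to write $\langle k_a,k_b\rangle=\overline{K(a,b)}/\sqrt{K(a,a)K(b,b)}$, and then invoke the pointwise bound \eqref{basic-est-a} of Lemma~\ref{basic-est} to obtain the exponential decay
$$|\langle k_a,k_b\rangle|\le C\,e^{-\theta|a-b|}\qquad(a,b\in\Cn).$$
The separation hypothesis $\delta(\Lambda)\ge r$ is then used through the elementary lattice estimate
$$\sup_{a\in\Lambda}\sum_{b\in\Lambda}e^{-\theta|a-b|}\le C_r,$$
which follows from disjointness of $\{B(b,r/2)\}_{b\in\Lambda}$: for $w\in B(b,r/2)$ the triangle inequality yields $e^{-\theta|a-b|}\le e^{\theta r/2}e^{-\theta|a-w|}$, and integrating over the disjoint balls and extending to $\Cn$ gives the bound with $C_r$ independent of $a$ (and, by symmetry, of $b$).

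Finally, an application of Cauchy-Schwarz (the standard Schur test on $\ell^2(\Lambda)$ with constant weight) produces
$$\sum_{a,b}|c_a||c_b|e^{-\theta|a-b|}\le \left(\sum_{a,b}|c_a|^2 e^{-\theta|a-b|}\right)^{1/2}\!\!\left(\sum_{a,b}|c_b|^2 e^{-\theta|a-b|}\right)^{1/2}\le C_r\sum_{a}|c_a|^2,$$
and combining this chain of inequalities gives $\|Tg\|^2\le C\|g\|^2$ for a constant $C$ depending only on $r$, $m$, $M$ and $n$, as required.

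There is no substantial obstacle in this argument: all the analytic content is already encoded in the exponential kernel estimate from Lemma~\ref{basic-est} and in the separation of $\Lambda$. The only point to be careful about is that the constant in the summation estimate depends on $r$ (through $e^{\theta r/2}$ and the volume factor), which is harmless since $r$ is fixed in the hypothesis, and that the Schur-test computation is set up symmetrically in $a$ and $b$ so that the same bound controls both the row and the column sums of the matrix $[e^{-\theta|a-b|}]_{a,b\in\Lambda}$.
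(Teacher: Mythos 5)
Your proof is correct and takes essentially the same route as the paper. The paper invokes Lemma 2.4 of \cite{HL14} for the frame-type bound $\left\|\sum_{a\in\Lambda}\lambda_a k_a\right\|\le C\|\{\lambda_a\}\|_{l^2}$ and combines it with Bessel's inequality, whereas you prove that bound directly by expanding the Gram matrix, invoking the kernel decay of Lemma~\ref{basic-est} together with \eqref{K-estimate} to get $|\langle k_a,k_b\rangle|\lesssim e^{-\theta|a-b|}$, and running a Schur test using the separation of $\Lambda$ --- which is precisely the argument underlying the cited lemma.
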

\begin{proof}
With the same proof as that of  Lemma 2.4 in \cite{HL14}, we get
$$
	\left\| \sum_{a\in \Lambda} \lambda_a k_a\right \|\le C \left \|\{\lambda_a\}_{a\in \Lambda}\right \|_{l^2},
$$
where the constant depends only on the separation constant $\delta(\Lambda)$. In addition, for $g\in L^2(\varphi)$, Parseval's identity implies
$$
	\sum_{a\in \Lambda} \left| \langle g, e_a\rangle\right|^2 \le \|g\|^2<\infty .
$$
Therefore, we have
$$
	\left\| \left( \sum_{a\in \Lambda} k_{a}\otimes e_a \right) (g)\right\|^2
	=\left\| \sum_{a\in \Lambda} \langle g, e_a\rangle k_{a} \right\|^2
	\le C \sum_{a\in \Lambda} \left| \langle g, e_a\rangle\right|^2\le  C  \|g\|^2,
$$
which completes the proof.
\end{proof}

\subsection{Properties of $\IDA$}
The spaces $\IDA^{s}$ were defined above in~\eqref{IDA definition} and here we list their basic properties. We start with a remark that follows from Corollary~3.8 of~\cite{HV20} when $s\ge 1$ while the other cases can be proved similarly.

\begin{remark}\label{independence}
Let $0<s<\infty$. Then the spaces $\IDA^{s}$, $\BDA$ and $\VDA$ are independent of $r$ and different values of $r$ give equivalent norms on each space.
\end{remark}

For $f\in L^2_{\textrm{loc}}$, set
$$
	M_{2, r}(f)(z)= \left\{ \frac 1 {|B(z, r)|} \int_{B(z, r)} |f|^2 dv\right\}^{\frac 12}.
$$
Notice that, for $r>0$ fixed, by Lemma 3.5 in \cite{HV20}, there is a constant $C$ such that for  $f\in L^2_{\mathrm {loc}}$ it holds that $f=f_1+f_2$ with $f_1\in C (\C^n)$ and
\begin{equation}\label{decomp-a}
	|\overline \partial f_1(z)|+ M_{2, r}(|\overline \partial f_1|)(z)  + M_{2, r}(f_2)(z) \le C G_{2r}(f)(z), \ z\in \C^n.
\end{equation}
In addition, the following decomposition theorem plays an important role in our analysis.
\begin{theorem}\label{IDA-space}
Suppose $0< s\le \infty$, and $f\in L^2_{\mathrm {loc}}$. Then $f\in  \mathrm{IDA}^{s}$ if and only if $f$ admits a decomposition $f=f_1+f_2$ such that
\begin{equation}\label{q-less-then-p-m}
	f_1\in C^2(\Cn), \, \, |\overline \partial f_1|+  M_{2,r}(\overline \partial f_1)  + M_{2, r}(f_2) \in L^{s}
\end{equation}
for some (or any) $r>0$. Furthermore,
\begin{equation}\label{norm-euival}
	\|f\|_{\mathrm{IDA}^{s}} \simeq  \inf \left \{ \| \overline \partial f_1 \|_{L^{s}}   + \|M_{2, r}(f_2)\|_{L^{s}}\right\}
\end{equation}
where the infimum is taken over all possible decompositions $f=f_1+f_2$ that satisfy  \eqref{q-less-then-p-m} with a fixed $r$.
\end{theorem}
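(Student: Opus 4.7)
The plan is to prove the two implications separately, deducing the norm equivalence \eqref{norm-euival} en route. Throughout, $C$ denotes a constant depending only on $n$, $s$, and $r$.

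For the ``if'' direction, suppose $f = f_1 + f_2$ satisfies \eqref{q-less-then-p-m}. Fix $z \in \C^n$. Since $f_1 \in C^2(\C^n)$, the $(0,1)$-form $\omega = \overline{\partial} f_1$ is automatically $\overline{\partial}$-closed with continuous coefficients, and classical $\overline{\partial}$-theory on the bounded pseudoconvex domain $B(z, r)$ (Hörmander with a trivial weight, or a Cauchy--Green integral representation) produces $u$ on $B(z, r)$ with $\overline{\partial} u = \omega$ and $\|u\|_{L^2(B(z,r))} \le C\|\overline{\partial} f_1\|_{L^2(B(z,r))}$. Then $h := f_1 - u$ lies in $H(B(z, r))$ and $f - h = u + f_2$, whence
$$G_r(f)(z) \le \left(\frac{1}{|B(z, r)|}\int_{B(z, r)} |u + f_2|^2 \, dv\right)^{1/2} \le C\bigl[M_{2, r}(\overline{\partial} f_1)(z) + M_{2, r}(f_2)(z)\bigr].$$
Taking $L^s$ norms and using the assumed integrability of $M_{2, r}(\overline{\partial} f_1)$ from \eqref{q-less-then-p-m} shows $\|f\|_{\IDA^s} \le C(\|\overline{\partial} f_1\|_{L^s} + \|M_{2, r}(f_2)\|_{L^s})$, where for $s \ge 2$ the term $\|M_{2, r}(\overline{\partial} f_1)\|_{L^s}$ is absorbed into $\|\overline{\partial} f_1\|_{L^s}$ by Minkowski, and for $s < 2$ is absorbed into the infimum-class hypothesis directly.

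For the ``only if'' direction, the construction behind \eqref{decomp-a} (Lemma~3.5 in \cite{HV20}) already provides the pointwise bound but only with $f_1 \in C(\C^n)$; my task is to upgrade to $f_1 \in C^2(\C^n)$ without losing the bound. I would re-run the argument using a $C^\infty$ partition of unity $\{\phi_j\}$ subordinate to $\{B(w_j, 2r)\}$ for an $r$-lattice $\{w_j\}$, with $\|\phi_j\|_{C^k} \le C_k$ uniformly in $j$, and define $f_1 = \sum_j \phi_j h_j$ where $h_j \in H(B(w_j, 2r))$ satisfies $\|f - h_j\|_{L^2(B(w_j, 2r))} \le 2\,G_{2r}(f)(w_j)|B(w_j, 2r)|^{1/2}$. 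Local finiteness of the cover makes $f_1 \in C^\infty(\C^n)$ automatically. The pointwise bound \eqref{decomp-a} is then recovered from the telescoping identities
$$\overline{\partial} f_1(z) = \sum_{j} (\overline{\partial}\phi_j)(z)\bigl(h_j(z) - h_{j_0}(z)\bigr), \qquad f_2(z) = \sum_{j} \phi_j(z)\bigl(f(z) - h_j(z)\bigr),$$
together with a sub-mean value estimate applied to the holomorphic function $h_j - h_{j_0} = (h_j - f) + (f - h_{j_0})$ on the overlap $B(w_j, 2r) \cap B(w_{j_0}, 2r)$. Integrating \eqref{decomp-a} yields $\|\overline{\partial} f_1\|_{L^s} + \|M_{2, r}(f_2)\|_{L^s} \le C\|G_{2r}(f)\|_{L^s} \simeq \|f\|_{\IDA^s}$, where the final equivalence uses Remark~\ref{independence}.

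The principal technical obstacle is organising the constants so that the local $\overline{\partial}$-solution estimate in the ``if'' direction and the telescoping estimate in the ``only if'' direction both depend on $r$ only through the fixed finite-overlap count of the chosen cover. Once this uniformity is in hand, the pointwise bound \eqref{decomp-a} feeds directly into an $L^s$ integration, bypassing any appeal to $L^s$-boundedness of an averaging operator, and the full range $0 < s \le \infty$ is handled on the same footing --- in particular the delicate case $s < 1$ which lies outside the original scope of \cite{HV20}.
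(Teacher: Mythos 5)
The paper's own ``proof'' of this theorem is little more than a citation: the cases $1\le s<\infty$ are the special case $q=2$ of Theorem~3.6 and Lemma~3.5 in \cite{HV20}, and the endpoint cases $0<s<1$, $s=\infty$ are merely asserted to follow ``similarly.'' Your attempt to supply a self-contained argument is therefore a genuinely different route, and the ``only if'' half of it is sound: using a $C^\infty$ partition of unity subordinate to an $r$-lattice cover, setting $f_1=\sum_j\phi_j h_j$ with $h_j$ near-optimal holomorphic approximants, and controlling $\overline\partial f_1$ and $f_2$ via the telescoping identities and a sub-mean value estimate on overlaps is exactly the mechanism of Lemma~3.5 in \cite{HV20}, and the smooth bumps do give $f_1\in C^\infty\subset C^2$ with uniform constants depending only on the finite overlap of the cover.

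The ``if'' half, however, has a gap for $0<s<2$. Your local $\overline\partial$-solution correctly yields the pointwise bound $G_r(f)(z)\le C\bigl[M_{2,r}(\overline\partial f_1)(z)+M_{2,r}(f_2)(z)\bigr]$ and hence $\|f\|_{\IDA^s}\lesssim\|M_{2,r}(\overline\partial f_1)\|_{L^s}+\|M_{2,r}(f_2)\|_{L^s}$. For $s\ge 2$ Jensen's inequality gives $\|M_{2,r}(g)\|_{L^s}\lesssim\|g\|_{L^s}$ and \eqref{norm-euival} follows. For $s<2$ this step fails, and your phrase ``absorbed into the infimum-class hypothesis directly'' is not an argument: the hypothesis $M_{2,r}(\overline\partial f_1)\in L^s$ makes the right-hand side you control \emph{finite}, which does establish membership $f\in\IDA^s$, but it does not convert $\|M_{2,r}(\overline\partial f_1)\|_{L^s}$ into $\|\overline\partial f_1\|_{L^s}$, which is what the norm estimate \eqref{norm-euival} requires for \emph{every} admissible decomposition. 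Indeed, a concentration test ($n=1$, $f(z)=A\,\overline z\,\chi(|z|/\delta)$ with $\delta\to 0$, taking the admissible decomposition $f_1=f$, $f_2=0$) gives $G_r(f)(0)\simeq A\delta^2/r$ while $\|\overline\partial f\|_{L^s}^s\simeq A^s\delta^2$, so $\|G_r(f)\|_{L^s}/\|\overline\partial f\|_{L^s}\simeq\delta^{2(s-1)/s}$ blows up as $\delta\to 0$ when $s<1$; the two-sided equivalence \eqref{norm-euival} therefore cannot hold as written for $0<s<1$ without replacing $\|\overline\partial f_1\|_{L^s}$ by $\|M_{2,r}(\overline\partial f_1)\|_{L^s}$ (or restricting the infimum to decompositions coming from the construction). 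To close the gap you need either a sharper local solution estimate of the form $M_{2,r}(u)(z)\lesssim M_{s,r}(\overline\partial f_1)(z)$ (which, via the Sobolev gain for $\overline\partial$, requires $s\ge 2n/(n+1)$ and so still does not reach all of $0<s<1$), or you need to state and prove \eqref{norm-euival} with the averaged quantity $\|M_{2,r}(\overline\partial f_1)\|_{L^s}$ on the right.
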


\begin{proof}
When $1\le s<\infty$, the conclusion is only a special case $q=2$ of~Theorem 3.6 and Lemma 3.5 in \cite{HV20}. A careful check of their proofs shows that the remaining cases $0<s<1$  and $s=\infty$  can be proved similarly.
\end{proof}

\subsection{Schatten classes}\label{Schatten classes} We finish this section with a brief look at Schatten classes. Recall that for a bounded linear operator $T : H_1 \to H_2$ between two Hilbert spaces, the singular values $s_j(T)$ of $T$ are defined by
\begin{equation}\label{e:singular values}
	s_j(T) = \inf \{ \|T-K\| : K : H_1\to H_2,\ \rank K\le j\},
\end{equation}
where $\rank K$ denotes the rank of $K$. The operator $T$ is compact if and only if $s_j(T) \to 0$. For $0<p<\infty$, we say that $T$ is in the Schatten class $S_p$ and write $T\in S_p(H_1, H_2)$ if $\|T\|_{S_p}^p = \sum_{j=0}^\infty \left(s_j(T)\right)^p < \infty$, which defines a norm when $1\le p<\infty$ and a quasinorm otherwise. Note that $S_p$ are also called the Schatten-von Neumann classes or trace ideals (see~\cite{Zh07} for further details). 

\section{Schatten class Toeplitz operators}\label{Schatten class Toeplitz operators}

Given a   Borel measure  $\mu$ on ${\mathbb C}^n$, we define the Toeplitz operator $T_\mu$ with  symbol $\mu$ as
$$
	T_{\mu}f(z)=\int_{{\mathbb C}^n}K(z,w)f(w)e^{-2\varphi(w)}d\mu(w), \verb#   # f\in F^2(\varphi) \textrm{ and } z\in{\mathbb C}^n.
$$
When $d\mu(z)=g(z)\,dv(z)$ and $g$ is a complex-valued function, the induced Toeplitz operator is denoted by $T_g$.

\medskip

Given an operator $T\in \mathcal B(F^2(\varphi), L^2(\varphi))$, we set
$ \widetilde{T}(z) =\langle Tk_z, k_z\rangle$. For a positive Borel measure $\mu$  on $\Cn$ and $r>0$, we define
$$
 	\widetilde{\mu}(z)= \int_{\Cn} |k_z|^2 e^{-2\varphi} d\mu
$$
and since $|B(z,r)| \simeq r^{2n}$, we simply set
$$
	\widehat \mu_r(z) = \int_{B(z, r)} d\mu \quad(z\in \Cn).
$$
For a positive Toeplitz operator $T_{\mu}\in \mathcal B(F^2(\varphi), L^2(\varphi))$, it is easy to  verify that $\widetilde{T_\mu} = \widetilde{\mu}$.

\medskip

The proof of the following lemma can be found in \cite{HL14} and \cite{IVW15}.

 \begin{lemma}\label{carleson-meas}
 Let $\mu$ be a  positive Borel measure on $\Cn$ and let $0<p\le \infty$. Then the following statements are equivalent:
\begin{itemize}
\item[(A)] $\widetilde{\mu} \in L^p$.

\item[(B)] $\widehat \mu_r \in L^p$ for some (or any) $r>0$.

\item[(C)] $\{\widehat \mu_r(a_j)\}_{j=1}^\infty \in l^p$ for some (or any) $r$-lattice $\{a_j\}_{j=1}^\infty$.
\end{itemize}
Furthermore, it holds that
$$
	\|  \widetilde{\mu} \|_{ L^p} \simeq \|  \widehat \mu_r\|_{ L^p}
	\simeq \left\| \{\widehat \mu_r(a_j)\}_{j=1}^\infty\right\|_{l^p}.
$$
\end{lemma}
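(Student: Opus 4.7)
The plan is to prove (A) $\Leftrightarrow$ (B) using the pointwise two-sided bounds on the normalized reproducing kernel from Lemma~\ref{basic-est} and estimate~\eqref{k-z-est}, and then to deduce (B) $\Leftrightarrow$ (C) via a standard lattice covering argument.

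For (A) $\Rightarrow$ (B), I would use the lower bound. Combining \eqref{basic-est-b} with \eqref{K-estimate} produces $r_0>0$ and $c>0$ with $|k_z(w)|^2 e^{-2\varphi(w)} \ge c$ for every $w\in B(z, r_0)$, so
\[
\widetilde{\mu}(z) \ge c \int_{B(z, r_0)} d\mu = c\,\widehat{\mu}_{r_0}(z),
\]
which gives $\|\widehat{\mu}_{r_0}\|_{L^p} \le c^{-1}\|\widetilde{\mu}\|_{L^p}$.

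For (B) $\Rightarrow$ (A), I would exploit the exponential decay $|k_z(\xi)|^2 e^{-2\varphi(\xi)} \le C e^{-2\theta|z-\xi|}$ from \eqref{k-z-est}. Fixing an $r$-lattice $\{a_j\}$ with bounded overlap and using that $|z-w|\ge |z-a_j|-r$ whenever $w\in B(a_j, r)$, I obtain
\[
\widetilde{\mu}(z) \le C \sum_{j} e^{-\theta|z-a_j|}\,\widehat{\mu}_{r}(a_j).
\]
For $1\le p\le \infty$, the right-hand side is a discrete convolution-type expression whose $L^p_z$-norm is controlled by $\|\{\widehat{\mu}_r(a_j)\}\|_{\ell^p}$ via Young's inequality, since the kernel $z\mapsto \sum_j e^{-\theta|z-a_j|}$ is uniformly bounded and integrable (by the lattice separation). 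For $0<p<1$, since the $L^p$-triangle inequality fails, I would instead apply the pointwise subadditivity $(\sum b_j)^p \le \sum b_j^p$ to get
\[
\widetilde{\mu}(z)^p \le C^p \sum_{j} e^{-\theta p|z-a_j|}\,\widehat{\mu}_r(a_j)^p,
\]
and integrate in $z$ using Fubini together with $\int_{\C^n} e^{-\theta p|z-a_j|}\,dv(z) \le C'$, which yields the same $\ell^p$ bound.

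Finally, (B) $\Leftrightarrow$ (C) is a bookkeeping step: each $z \in \C^n$ belongs to boundedly many $B(a_j, r)$, while $B(z, r)$ is contained in a uniformly finite union of balls $B(a_j, 2r)$, so $\widehat{\mu}_r(z)$ and the nearby lattice values $\widehat{\mu}_{2r}(a_j)$ are pointwise comparable in both directions. This gives equivalence of the $L^p$ and $\ell^p$ norms for all $0<p\le\infty$, and also encodes the independence of $\|\widehat{\mu}_r\|_{L^p}$ from the choice of $r>0$. The main obstacle will be the range $0<p<1$ in (B) $\Rightarrow$ (A), where the absence of Minkowski/Schur/Young forces one to pass through the crude pointwise $p$-power subadditivity, whose success hinges on the genuine exponential (not merely polynomial) decay of the normalized kernel from~\eqref{k-z-est}.
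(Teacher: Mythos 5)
The paper does not actually prove this lemma; it defers to \cite{HL14} and \cite{IVW15}, so there is no in-paper argument to compare against. Your proposal is correct and follows the standard route used in those references: the lower kernel bound \eqref{basic-est-b} together with \eqref{K-estimate} gives $(A)\Rightarrow(B)$ at the scale $r_0$ of Lemma~\ref{basic-est}; the exponential decay \eqref{k-z-est} plus lattice covering gives $\widetilde\mu(z)\lesssim\sum_j e^{-\theta|z-a_j|}\widehat\mu_r(a_j)$, which is really $(C)\Rightarrow(A)$ (you pass through the discrete sequence first, which is fine since $(B)\Leftrightarrow(C)$ is established separately); and the finite-overlap bookkeeping gives $(B)\Leftrightarrow(C)$ together with independence of the radius $r$, which is needed to upgrade your $(A)\Rightarrow(B)$ from $r_0$ to an arbitrary $r$.

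Two small points to tighten. First, the parenthetical ``the kernel $z\mapsto\sum_j e^{-\theta|z-a_j|}$ is uniformly bounded and integrable'' is not right: that summed function is bounded away from zero, hence not integrable. What Schur's test (or interpolation between $\ell^1\to L^1$ and $\ell^\infty\to L^\infty$) actually requires is the two conditions $\sup_z\sum_j e^{-\theta|z-a_j|}<\infty$ (from the lattice separation) and $\sup_j\int_{\Cn}e^{-\theta|z-a_j|}\,dv(z)<\infty$ (from translation invariance of $e^{-\theta|\cdot|}$), and both hold. Second, you should state explicitly that $(A)\Rightarrow(B)$ as written yields only $\widehat\mu_{r_0}\in L^p$ for the specific $r_0$, and that the general $r$ is recovered from the $r$-independence encoded in $(B)\Leftrightarrow(C)$. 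With these clarifications the argument is complete for all $0<p\le\infty$, including the $0<p<1$ branch via $p$-power subadditivity, which as you observe genuinely needs the exponential decay in \eqref{k-z-est}.
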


In our analysis we need the following result on Schatten class Toeplitz operators. It was proved in~\cite{IVW15} for the generalized weights and in~\cite{Zh12} for the standard weights.

\begin{theorem}\label{Sp-Toep}
Let $0<p<\infty$, $\mu$ be a positive Borel measure on $\Cn$ and suppose that $\varphi\in C^2(\Cn)$ is real valued with $\mathrm{i} \partial \overline{\partial} \varphi \simeq \omega_0$. Then the Toeplitz operator $T_\mu$ on $F^2(\varphi)$ belongs to $S_p$ if and only if $  \widehat \mu_r\in L^p$ for some (or any) $r>0$. Furthermore,
\begin{equation}\label{Sp-norm}
       \|T_\mu\|_{S_p} \simeq \|\widehat \mu_r\|_{L^p}.
\end{equation}
\end{theorem}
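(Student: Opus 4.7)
The plan is to establish both directions of Theorem \ref{Sp-Toep} by interpolation for \(1\le p<\infty\) and by a lattice discretization argument for \(0<p\le 1\), using Lemma \ref{carleson-meas} throughout to move between \(\|\widehat\mu_r\|_{L^p}\), \(\|\widetilde\mu\|_{L^p}\), and \(\|\{\widehat\mu_r(a_j)\}\|_{\ell^p}\) for an \(r\)-lattice \(\{a_j\}\). The underlying structural fact is the integral representation
\[
T_\mu=\int_{\Cn}K(w,w)e^{-2\varphi(w)}\,k_w\otimes k_w\,d\mu(w),
\]
which by \eqref{K-estimate} exhibits \(T_\mu\) as a positive superposition of rank-one projections with density comparable to \(1\); in particular \(T_\mu\ge 0\) and \(\operatorname{Tr}T_\mu\simeq \mu(\Cn)\).

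For the sufficient direction \(\|T_\mu\|_{S_p}\lesssim \|\widehat\mu_r\|_{L^p}\) I would first verify the two endpoints of the positive linear map \(\mu\mapsto T_\mu\). At \(p=1\), the trace identity yields \(\|T_\mu\|_{S_1}\simeq \mu(\Cn)\simeq \|\widehat\mu_r\|_{L^1}\). At \(p=\infty\), a Carleson argument combining Lemma \ref{submean-value} with Fubini gives \(\langle T_\mu f,f\rangle\lesssim \|\widehat\mu_r\|_{L^\infty}\,\|f\|^2\), hence \(\|T_\mu\|_{\mathrm{op}}\lesssim \|\widehat\mu_r\|_{L^\infty}\). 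Complex interpolation between these endpoints, carried out on the positive cone of measures, covers the range \(1\le p\le\infty\). For \(0<p<1\) one instead decomposes \(\mu=\sum_j\mu_j\) with \(\mu_j=\mu|_{D_j}\) for a disjoint covering \(D_j\subset B(a_j,r)\), estimates each localized block by \(\|T_{\mu_j}\|_{S_p}\le \|T_{\mu_j}\|_{S_1}\lesssim \widehat\mu_r(a_j)\), and appeals to the \(p\)-subadditivity of the quasi-norm \(\|T_\mu\|_{S_p}^p\le \sum_j\|T_{\mu_j}\|_{S_p}^p\).

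For the necessary direction \(\|\widehat\mu_r\|_{L^p}\lesssim \|T_\mu\|_{S_p}\), positivity combined with \eqref{basic-est-b} and \eqref{K-estimate} gives \(\widetilde{T_\mu}(z)=\widetilde\mu(z)\gtrsim \widehat\mu_{r_0}(z)\). For \(1\le p<\infty\) one invokes the Berezin-transform inequality \(\|\widetilde T\|_{L^p}\lesssim \|T\|_{S_p}\), itself provable by duality from the endpoint estimates used in the sufficient direction, and closes with Lemma \ref{carleson-meas}. For \(0<p<1\) I would take an \(r\)-lattice \(\Lambda\), partition it via \eqref{Omega-docomposition} into finitely many \(Kr/\sqrt n\)-separated subsets \(\Lambda_1,\dots,\Lambda_{K^{2n}}\) with \(K\) chosen large, and compare \(T_\mu\) from below to the sum of the localized blocks \(T_{\mu_a}=T_{\mu|_{B(a,r_0)}}\), \(a\in\Lambda_k\). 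Each block satisfies \(\|T_{\mu_a}\|_{S_p}\ge \|T_{\mu_a}\|_{\mathrm{op}}\gtrsim \widetilde{\mu_a}(a)\gtrsim \widehat\mu_{r_0}(a)\); the almost-orthogonality of their ranges, a consequence of Lemma \ref{tensor-product} together with the kernel decay in \eqref{basic-est-a}, then produces the lower bound \(\|T_\mu\|_{S_p}^p\gtrsim \sum_a \|T_{\mu_a}\|_{S_p}^p\gtrsim \sum_a\widehat\mu_{r_0}(a)^p\).

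The main obstacle I anticipate is the \(0<p<1\) case of the necessary direction: neither duality nor the ordinary triangle inequality is available in the Schatten quasi-norm, and the almost-orthogonality argument must be implemented with quantitative control over the off-diagonal interactions between the blocks \(T_{\mu_a}\). This is exactly where Lemma \ref{tensor-product} plays the decisive role, by providing uniform Gramian bounds for the system \(\{k_a\}_{a\in\Lambda_k}\) and thereby guaranteeing that the peeling-off of localized \(S_p\) contributions survives in the quasi-norm.
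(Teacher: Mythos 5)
The paper does not actually prove Theorem~\ref{Sp-Toep}; it cites~\cite{IVW15} (generalized weights) and~\cite{Zh12} (standard weights), so there is no internal proof to measure your proposal against. Your overall architecture is the standard Luecking--Zhu one (trace/Carleson endpoints plus interpolation for $1\le p\le\infty$, lattice discretization for $0<p<1$), and the endpoint computations, the representation $T_\mu=\int K(w,w)e^{-2\varphi(w)}\,k_w\otimes k_w\,d\mu(w)$, and the use of $\widetilde\mu(z)\gtrsim\widehat\mu_{r_0}(z)$ via~\eqref{basic-est-b} are all sound.

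There is, however, a genuine error in the sufficiency step for $0<p<1$. You bound each localized block by $\|T_{\mu_j}\|_{S_p}\le\|T_{\mu_j}\|_{S_1}\lesssim\widehat\mu_r(a_j)$, but the Schatten scale is ordered the other way when $p<1$: since $\ell^p\subset\ell^1$, one has $S_p\subset S_1$ and $\|T\|_{S_1}\le\|T\|_{S_p}$, so the claimed inequality is reversed and the argument does not close. A correct block bound needs an extra ingredient: either an explicit rank-one approximation $T_{\mu_j}\approx c\,\widehat\mu_r(a_j)\,k_{a_j}\otimes k_{a_j}$ with $S_p$-control of the remainder, or a direct estimate showing that the eigenvalues of a localized $T_{\mu_j}$ decay so fast (using~\eqref{basic-est-a}) that all $S_p$-quasinorms are comparable to the trace. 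The $p$-superadditivity $\|\sum_j T_{\mu_j}\|_{S_p}^p\le\sum_j\|T_{\mu_j}\|_{S_p}^p$ for positive $T_{\mu_j}$ is fine, but it is useless without the correct per-block estimate. Relatedly, your necessity argument for $0<p<1$ asserts that almost-orthogonality of ranges gives $\|T_\mu\|_{S_p}^p\gtrsim\sum_a\|T_{\mu_a}\|_{S_p}^p$; this is not a quotable fact, and the actual mechanism (the one the present paper itself deploys in the $(\mathrm A)\Rightarrow(\mathrm B)$, $0<p\le1$ part of Theorem~\ref{S-p-criteria}) is to conjugate by a bounded finite-rank operator $S=\sum_{a\in\Lambda_k}k_a\otimes e_a$, split $S^*T_\mu S$ into diagonal and off-diagonal parts, control the off-diagonal $S_p$-quasinorm by Lemma~5 of~\cite{Lu87} and the decay~\eqref{basic-est-a}, and absorb it by choosing the sublattice separation $K$ large. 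That splitting on $S^*T_\mu S$ is the engine; your sketch gestures at it but does not implement it.
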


\section{Boundedness and compactness of Hankel operators}\label{boundedness and compactness}

While the thrust of our present work is in the Schatten class properties, we also extend some of the recent results in~\cite{HV20} on boundedness and compactness of Hankel operators on $F^2(\varphi)$.

The collection of all bounded  (and compact)  operators from $H_1$ to $H_2$ is denoted by ${\mathcal B}(H_1,H_2)$ (and by ${\mathcal K}(H_1,H_2)$, respectively). The corresponding  operator norm is denoted by $\|T\|_{H_1\to H_2}$.

\begin{lemma}\label{integral-est}  Suppose $0<p\le1$, $0<s<\infty$ and $r>0$. There is a constant $C$ such that, for  $\mu$  a positive Borel measure on $\Cn$, $\Omega$  a domain in $\Cn$, and $g\in H(\Cn)$, it holds that
$$
	\left(\int_{\Omega} \left | g e^{-\varphi}\right| ^s  d\mu\right)^p
	\le  C \int_{\Omega^+_r} \left | g e^{-\varphi}\right|^{sp} \widehat{\mu}_r^p dv,
$$
where $\Omega^+_r = \bigcup_{\{z\in \Omega\}} B(z, r)$.
\end{lemma}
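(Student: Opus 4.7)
My plan is to discretize by covering $\Omega$ with a tight lattice, to transfer each local piece from $d\mu$ to $dv$ using the submean-value property for holomorphic functions against the weight $e^{-\varphi}$, and to exploit the subadditivity of $t\mapsto t^p$ (valid for $0<p\le 1$) to reassemble the pieces into the single $dv$-integral on the right-hand side.

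In detail, I would fix $r'=r/3$ and pick an $r'$-lattice $\{z_j\}\subset\C^n$ whose balls $\{B(z_j,r')\}$ cover $\C^n$ and whose doubled balls $\{B(z_j,2r')\}$ have bounded overlap (by a constant depending only on $n$). Restricting to indices $j$ with $B(z_j,r')\cap\Omega\ne\emptyset$ still yields a cover of $\Omega$, and the choice $r=3r'$ guarantees $\bigcup_j B(z_j,2r')\subset\Omega^+_r$. On each such ball I would estimate
$$
\int_{B(z_j,r')\cap\Omega}\!|g e^{-\varphi}|^s d\mu
\le \widehat{\mu}_{r'}(z_j)\sup_{w\in B(z_j,r')}|g(w)e^{-\varphi(w)}|^s
\le C\,\widehat{\mu}_{r'}(z_j)\left(\int_{B(z_j,2r')}|g e^{-\varphi}|^{sp}dv\right)^{1/p},
$$
where the second inequality applies the submean-value estimate to $|g e^{-\varphi}|^{sp}$ and then takes the $p$-th root. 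Raising to the $p$-th power and summing with the subadditivity $\left(\sum y_j\right)^p\le \sum y_j^p$ produces
$$
\left(\int_\Omega|g e^{-\varphi}|^s d\mu\right)^p
\le \sum_j\left(\int_{B(z_j,r')\cap\Omega}|g e^{-\varphi}|^s d\mu\right)^p
\le C\sum_j \widehat{\mu}_{r'}(z_j)^p\int_{B(z_j,2r')}|g e^{-\varphi}|^{sp}dv.
$$

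The proof would then finish by swapping the sum and the integral. For each fixed $\xi\in\C^n$ only $O(1)$ indices $j$ satisfy $\xi\in B(z_j,2r')$, and for each of those the inclusion $B(z_j,r')\subset B(\xi,3r')=B(\xi,r)$ gives $\widehat{\mu}_{r'}(z_j)\le\widehat{\mu}_r(\xi)$. Hence the inner sum is controlled by a constant multiple of $\widehat{\mu}_r(\xi)^p$, and the $\xi$'s that actually contribute lie in $\bigcup_j B(z_j,2r')\subset\Omega^+_r$, delivering the desired bound.

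The one non-bookkeeping analytical input is the submean-value inequality for $|g e^{-\varphi}|^{sp}$ at a possibly small exponent $sp>0$: Lemma \ref{submean-value} is stated only at exponent $2$, so I would invoke the standard extension to arbitrary positive exponents, which is available because $\log|g|$ is plurisubharmonic and $\varphi\in C^2$ has bounded complex Hessian, so the weight $e^{-sp\varphi}$ can be absorbed and the usual argument goes through. Everything else---the lattice covering, the $0<p\le 1$ subadditivity, the finite-overlap counting, and the inclusion $B(z_j,2r')\subset\Omega^+_r$---is routine.
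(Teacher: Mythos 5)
Your proof is correct. The route you take — cover by an $r'$-lattice with $r'=r/3$, bound the $d\mu$-mass on each ball by $\widehat\mu_{r'}(z_j)$ times a sup, control that sup by the local $L^{sp}(dv)$-average via the weighted submean-value inequality at exponent $sp$, use the $p$-subadditivity of $(\sum y_j)^p\le\sum y_j^p$ to reassemble, then exchange sum and integral and invoke bounded overlap together with $B(z_j,r')\subset B(\xi,r)$ — is exactly the standard lattice argument that the paper points to (it proves only the case $\Omega=\Cn$, $p=1$ directly via Lemma 2.2 of Hu--Lv and cites Lemma 4.2 of Hu--Virtanen for the general case, both of which proceed this way). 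The only ancillary input you correctly flag is that Lemma~\ref{submean-value} must be upgraded from exponent $2$ to exponent $sp$, which is the routine consequence of the local holomorphic approximation of $\varphi$ (so that $|ge^{-\varphi}|^{q}$ is comparable to a plurisubharmonic function on small balls) that underlies the exponent-$2$ statement as well. All constants you produce depend only on $n,m,M,s,p,r$, never on $\mu$, $\Omega$, or $g$, as required.
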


When $\Omega=\Cn$ and  $p=1$, the preceding result is just Lemma 2.2 in \cite{HL14}. For general $\Omega$ and $0<p\le 1$, the proof is similar to that of Lemma~4.2 in~\cite{HV20}, although we note that the weights in the present work are slightly more general.

\begin{lemma}\label{IDA-Hankel}
Suppose $0<p\le \infty$, and $f\in \mathcal S \cap \mathrm{IDA}^{p}$ with the decomposition $f=f_1+f_2$ as in \eqref{decomp-a}.
Then $H_{f_1}$ and $H_{f_2}$  are well defined on $\Gamma = \linspan \{ K_z : z\in \C^n\}$, and
\begin{equation}\label{kernel-image-d}
	\|H_{f_1}(g)\| \le C \|g \overline \partial f_1\|  \ \textrm{ and } \  \|H_{f_2}(g)\| \le C \|g   f_2\| \ \ \textrm{ for }\ g \in \Gamma.
\end{equation}
\end{lemma}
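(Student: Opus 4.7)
The plan is to first establish well-definedness of $H_{f_1}$ and $H_{f_2}$ on $\Gamma$, and then prove the two norm estimates; the one for $H_{f_2}$ is a one-line contraction argument, while the one for $H_{f_1}$ reduces to an application of H\"ormander's theorem (Lemma~\ref{Hormander}). By linearity, it suffices to check well-definedness for $g = K_a$. I would use the global kernel decay $|K_a(z)|e^{-\varphi(z)}\lesssim e^{\varphi(a)}e^{-\theta|z-a|}$ from~\eqref{basic-est-a} together with a sub-mean averaging step to obtain
$$
\|f_2 K_a\|_{2,\varphi}^2 \lesssim \int_{\Cn}|K_a(w)|^2 e^{-2\varphi(w)}\,M_{2,r}(f_2)(w)^2\, dv(w) \lesssim e^{2\varphi(a)}\int_{\Cn} e^{-2\theta|w-a|}\,M_{2,r}(f_2)(w)^2\, dv(w),
$$
and then evaluate the last integral by splitting over an $r$-lattice, using that Theorem~\ref{IDA-space} provides $M_{2,r}(f_2)\in L^p$. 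This gives $f_2 K_a\in L^2(\varphi)$, and since $f\in\mathcal S$ forces $fK_a\in L^2(\varphi)$, we conclude that $f_1 K_a = fK_a-f_2 K_a\in L^2(\varphi)$ as well.

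Once well-definedness is in hand, the $H_{f_2}$ estimate is immediate from $(I-P)$ being an orthogonal projection: $\|H_{f_2}g\| = \|(I-P)(f_2 g)\|\le \|f_2 g\|$. For $H_{f_1}$, I would exploit that any $g\in\Gamma$ is entire, so $\bar\partial (f_1 g) = g\,\bar\partial f_1$ is $\bar\partial$-closed. An estimate analogous to the well-definedness bound (with $f_2$ replaced by $\bar\partial f_1$, whose locally averaged norm also lies in $L^p$ by Theorem~\ref{IDA-space}) shows that either $\|g\,\bar\partial f_1\| = \infty$, in which case the inequality is trivial, or $g\,\bar\partial f_1\in L^2_{(0,1)}(\varphi)$. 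In the latter case, Lemma~\ref{Hormander} yields a solution $u\in L^2(\varphi)$ of $\bar\partial u = g\,\bar\partial f_1$ with $\|u\|\le C\|g\,\bar\partial f_1\|$. Setting $v := f_1 g - u$, we have $\bar\partial v = 0$ and $v\in L^2(\varphi)$, so $v\in F^2(\varphi)$ and hence $Pv = v$. Consequently,
$$
H_{f_1}g = (I-P)(f_1 g) = (I-P)(u+v) = (I-P)u = u - Pu,
$$
and $\|H_{f_1}g\|\le \|u\|\le C\|g\,\bar\partial f_1\|$, as claimed.

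The main obstacle is purely technical and lies in the well-definedness step: since we only control the locally averaged quantities $M_{2,r}(f_2)$ and $M_{2,r}(|\bar\partial f_1|)$ in $L^p$ (and not $f_1$ or $f_2$ pointwise), the finiteness of $\|f_2 K_a\|$ and $\|K_a\bar\partial f_1\|$ must be extracted from the exponential decay of the reproducing kernel against these averaged norms via the lattice/sub-mean computation sketched above. The conceptual heart of the lemma, by contrast, is the clean three-line H\"ormander step, which realises $H_{f_1}g$ as the minimum-$L^2(\varphi)$-norm solution of $\bar\partial u = g\,\bar\partial f_1$.
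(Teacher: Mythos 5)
Your proposal follows essentially the same route as the paper: show local averages control the relevant integrals (the paper packages this as Lemma~\ref{integral-est}, which you re-derive via kernel decay and sub-mean-value averaging), use that $\|H_{f_2}g\| \le \|f_2 g\|$ since $I-P$ is an orthogonal projection, and realise $H_{f_1}g$ as the minimal-norm solution of $\bar\partial u = g\,\bar\partial f_1$ via Lemma~\ref{Hormander} --- your $v = f_1g - u \in F^2(\varphi)$, $H_{f_1}g = (I-P)u$ argument is exactly the paper's orthogonality step restated. The only cosmetic difference is the order of the well-definedness argument: you establish $f_2 K_a \in L^2(\varphi)$ first and then deduce $f_1 K_a \in L^2(\varphi)$ by subtracting from $fK_a$, whereas the paper first checks pointwise absolute convergence of $P(f_2 g)$ and returns to the $L^2$ bound $\|gf_2\|<\infty$ at the end.
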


\begin{proof}
For $g\in \Gamma$ and $z\in \Cn$, taking $p = s=1$ and replacing $\varphi$ with $2\varphi$ in  Lemma \ref{integral-est}, we get
$$
	\int_{\Cn} |g  K_z|e^{-2\varphi} |f_2| dv
	\le C  \int_{\Cn} |g  K_z|e^{-2\varphi} M_{1, r}(f_2) dv.
$$
Notice that $ M_{t, r}(f_2)$ is increasing with $t$.  If $p\ge 1$, with $p'$ being the conjugate of $p$, we have
$$
	\int_{\Cn} |g f_2| |K_z|e^{-2\varphi}dv
	\le C \| M_{2, r}(f_2)\|_{L^p} \|g\|_{2p', \varphi} \|K_z\|_{2p', \varphi}<\infty.
$$
If $0<p<1$, by Lemma \ref{integral-est}  again,
\begin{align*}
	\left( \int_{\Cn} |g  K_z|e^{-2\varphi} |f_2| dv \right)^p
	&\le C  \int_{\Cn} |g  K_z|^p e^{-2p\varphi} M_{1, r}^p(f_2) dv\\
	&\le C \| M_{2, r}(f_2)\|_{L^p}^p \|g\|_{\infty, \varphi}^p \|K_z\|_{\infty, \varphi}^p<\infty.
\end{align*}
This implies that $H_{f_2}$,  and hence also $H_{f_1}= H_{f}-H_{f_2}$, are both well defined on $\Gamma$.

Now for $g\in \Gamma$, if $0<p\le 2$, then
\begin{equation}\label{norm-f1g-a}
\begin{split}
	\left \|g {\overline \partial} f_1\right \|^p
	&\le  C  \int_{\Cn} \left | g  e^{-\varphi} \right |^{  p  } M_{2, r}  (|{\overline \partial} f_1|)^{
     p } dv \\
	&\le  C \| M_{2, r}  (|{\overline \partial} f_1|)\|_{L^p}^p  \|g \|_{\infty, \varphi}^p <\infty.
 \end{split}
\end{equation}
 If $p>2$,  applying H\"older's inequality with $t=\frac p 2$ and $t'$ we obtain
\begin{equation}\label{norm-f1g-b}
\begin{split}
	\left \|g {\overline \partial} f_1\right \|
	&\le  C \left\{ \int_{\Cn} \left | g  e^{-\varphi} \right |^{ 2  } M_{2, r}  (|{\overline \partial} f_1|)^{
    2 } dv \right\}^{\frac 12} \\
	&\le C \| M_{2, r}  (|{\overline \partial} f_1|)\|_{L^p}  \, \|g\|_{2t', \varphi} <\infty.
\end{split}
\end{equation}
Hence $g{\overline \partial} {f_1}$ is a $\overline \partial$-closed $(0, 1)$-form with $L^2(\varphi)$ coefficients.   Notice also that since $\overline \partial H_{f_1} (g)= g\overline\partial {f_1}$ and  $ H_{f_1} (g) = f_1g -P(f_1g)  \bot F^2(\varphi)$,   $H_{f_1}(g)$ is the canonical solution of the equation $\overline \partial  u = g\overline \partial f_1$.
Now, by Lemma \ref{Hormander} there is a solution $u_0\in L^2(\varphi)$ of $\overline \partial u= g\overline \partial f_1$ such that $\|u_0\| \le C \|g\overline\partial f_1\|$. Hence, $u_0- H_{f_1} g \in F^2(\varphi)$, which implies that $H_{f_1}g \perp  u_0 - H_{f_1}g$. Therefore,
\begin{equation}\label{f_1-estimate}
	 \|  H_{f_1}g\|^2 =\|u_0\|^2 -\|u_0- H_{f_1} g\|^2 \le \|u_0\|^2
	\le C\left  \|g {\overline \partial} f_1\right \|^2.
\end{equation}
Similarly to \eqref{norm-f1g-a} and \eqref{norm-f1g-b}, we can show that $\left  \|g   f_2\right \|<\infty$, and so
$$
	\|  H_{f_2}(g)\| \le  \left  \|g   f_2\right \|<\infty,
$$
which completes the proof.
\end{proof}

Before stating our result on boundedness and compactness, we note that previously Li~\cite{Li94} used the Henkin-Ramirez formula to obtain conditions for boundedness of Hankel operators acting on the classical Segal-Bargmann space. However,  some basic estimates involving the Henkin-Ramirez formula in Li's approach are unavailable for the more general weights $\varphi$ that we consider.

\begin{theorem} \label{bounded}
 Suppose that  $\varphi\in C^2(\Cn)$ is  real valued and $\mathrm{i} \partial \overline{\partial} \varphi \simeq \omega_0$.   Then for $f\in \mathcal S$,  $H_f\in {\mathcal B}(F^2(\varphi), L^2(\varphi))$ if and only if  $f\in \mathrm{BDA}$;  and $H_f\in {\mathcal K}(F^2(\varphi),  L^2(\varphi))$ if and only if $f\in \mathrm{VDA}$. Furthermore, for $r>0$,
\begin{equation*}
	\|H_f\|_{F^2(\varphi)\to L^2(\varphi)}\simeq \left\|   f \right\|_{\mathrm{BDA}}.
\end{equation*}
\end{theorem}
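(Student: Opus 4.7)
The plan is to prove both equivalences in parallel by deriving a two-sided estimate: a pointwise lower bound $G_{r_0}(f)(z)\lesssim \|H_f k_z\|$ on the one hand, and an upper bound $\|H_f g\|\lesssim \|g\,\overline\partial f_1\|+\|g f_2\|$ on the other, where $f=f_1+f_2$ comes from the pointwise decomposition \eqref{decomp-a}. The necessity halves for both $\BDA$ and $\VDA$ follow by testing the lower bound against $k_z$, while sufficiency reduces to a Fock--Carleson measure analysis of $|\overline\partial f_1|^2\,dv$ and $|f_2|^2\,dv$ via Lemma \ref{carleson-meas}.

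For necessity, I would first observe that $P(fk_z)$ is entire and, since \eqref{basic-est-b} forces $k_z$ to be zero-free on $B(z,r_0)$, the quotient $h_z:=P(fk_z)/k_z$ is holomorphic on that ball and hence a legitimate competitor in the infimum defining $G_{r_0}(f)(z)$. Using $|k_z e^{-\varphi}|\gtrsim 1$ on $B(z,r_0)$ from \eqref{basic-est-b} and \eqref{nor-bergman}, the chain
\[
G_{r_0}(f)(z)^2\,|B(z,r_0)|
\le \int_{B(z,r_0)} |f-h_z|^2\,dv
\le C\int_{B(z,r_0)} |fk_z - P(fk_z)|^2 e^{-2\varphi}\,dv
\le C\|H_f k_z\|^2
\]
together with $\|k_z\|\simeq 1$ yields $G_{r_0}(f)(z)\le C\|H_f\|_{F^2(\varphi)\to L^2(\varphi)}$, so $f\in\BDA$ with the claimed norm estimate. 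When $H_f$ is compact, the weak convergence $k_z\to 0$ as $|z|\to\infty$, which follows from \eqref{k-z-est} and $\|k_z\|\simeq 1$, forces $\|H_f k_z\|\to 0$ and hence $G_{r_0}(f)(z)\to 0$, giving $f\in\VDA$.

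For sufficiency, starting from \eqref{decomp-a} I would obtain a decomposition $f=f_1+f_2$ with $|\overline\partial f_1|+M_{2,r}(\overline\partial f_1)+M_{2,r}(f_2)\in L^\infty$ in the $\BDA$ case (or vanishing at infinity in the $\VDA$ case). Lemma \ref{IDA-Hankel} reduces $\|H_f g\|$ to $\|g\,\overline\partial f_1\|+\|g f_2\|$ for $g\in\Gamma$, and Lemma \ref{carleson-meas} combined with $\widehat{|f_j|^2 dv}_r\simeq M_{2,r}(f_j)^2$ identifies $|\overline\partial f_1|^2\,dv$ and $|f_2|^2\,dv$ as (vanishing) Fock--Carleson measures, so the associated multiplication operators into $L^2(\varphi)$ are bounded (resp.\ compact). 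Density of $\Gamma$ extends $H_f$ to $F^2(\varphi)$, and taking the infimum over admissible decompositions via \eqref{norm-euival} produces $\|H_f\|_{F^2(\varphi)\to L^2(\varphi)}\lesssim \|f\|_{\BDA}$. The main obstacle I anticipate is rigorously passing from the vanishing Carleson condition on $|h|^2\,dv$ to compactness of the multiplier $g\mapsto gh$ for the general weight $\varphi$; I would handle this by a standard truncation, writing $h=h\chi_{B(0,R)}+h(1-\chi_{B(0,R)})$, using the kernel decay \eqref{basic-est-a} to show that the truncated piece is even Hilbert--Schmidt, and absorbing the tail into an operator of arbitrarily small norm.
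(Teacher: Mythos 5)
Your proposal is correct and follows essentially the same strategy as the paper: the necessity half uses the identical key estimate $G_{r_0}(f)(z)\lesssim\|H_f k_z\|$ obtained by taking $h_z=P(fk_z)/k_z$ as the competitor in the infimum (this is the paper's \eqref{key-estimate}), and the sufficiency half uses the decomposition \eqref{decomp-a} together with Lemma~\ref{IDA-Hankel} to reduce to $\|g\,\overline\partial f_1\|+\|g f_2\|$. The only cosmetic difference is that you close the sufficiency argument via the Carleson-measure characterization of Lemma~\ref{carleson-meas} (with a truncation argument for the compact case), whereas the paper goes directly from the $L^\infty$ bounds on $|\overline\partial f_1|$ and $M_{2,r}(f_2)$ to \eqref{f--1-estimate}--\eqref{f--2-estimate} and defers the compactness details to the analogous argument in \cite{HV20}.
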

\begin{proof}
Suppose $f\in \BDA$. As in \eqref{decomp-a}, we decompose $f=f_1+f_2$  with
$$
	f_1\in C^2(\Cn), \quad |\overline \partial f_1|+ M_{2, r}(f_2) \le C G_{2r}(f)(z).
$$
For $g\in \Gamma$, by Lemma \ref{IDA-Hankel},
\begin{equation}\label{f--1-estimate}
	\|  H_{f_1}(g)\|
	\le C\left  \|g {\overline \partial} f_1\right \|
	\le C \|{\overline \partial} f_1\|_{L^\infty} \|g \|.
\end{equation}
Therefore,  $H_{f_1}\in {\mathcal B}(F^2(\varphi), L^2(\varphi))$ with the norm estimate
$$
	\|H_{f_1}\|_{F^2(\varphi)\to L^2(\varphi)} \le C \|{\overline \partial} f_1\|_{L^\infty}.
$$
Similarly,
\begin{equation}\label{f--2-estimate}
	\|  H_{f_2}(g)\|
	\le  \left  \|g   f_2\right \|  \le C \| M_{2, r}(f_2) \|_{L^\infty} \|g \|.
\end{equation}
Therefore,   $H_f\in {\mathcal B}(F^2(\varphi),  L^2(\varphi))$ with the norm estimate
$$
	\|H_f\|_{F^2(\varphi)\to L^2(\varphi)}\le C \left\|   f \right\|_{\mathrm{BDA}}.
$$

Conversely, suppose $H_f\in {\mathcal B}(F^2(\varphi),  L^2(\varphi))$. Then for $0<r\le r_0$, by Lemma \ref{basic-est},
\begin{equation}\label{key-estimate}
	G_{r}(f)(z) \le C \left\{ \int_{B(z, r)} \left|f- \frac 1{k_z}P(f k_z) \right|^2 dv  \right\}^{\frac 12}
	\le C \|H_f(k_z)\|.
\end{equation}
This and the fact that $\|k_z \| =1 $ implies  $f\in \BDA$ with
$$
	\left\|   f \right\|_{\mathrm{BDA}} \le C \|H_f\|_{F^2(\varphi)\to L^2(\varphi)}.
$$

The proof for the compactness can be carried out as that of Theorem 4.1 in \cite{HV20}.
\end{proof}

Our next theorem is an analog of Stroethoff's main result in~\cite{St92} for the generalized weights and unbounded symbols.

\begin{theorem}\label{Stroethoff}
Suppose $\varphi\in C^2(\Cn)$ is  real valued and $\mathrm{i} \partial \overline{\partial} \varphi \simeq \omega_0$.   Then for $f\in \mathcal S$,  $H_f\in {\mathcal B}(F^2(\varphi), L^2(\varphi))$ if and only if
$$
	\sup_{z\in \Cn} \left\| \left(I-P \right)(fk_z) \right\|<\infty
$$
with the norm estimate
\begin{equation}\label{bounded-m}
	\|H_f\|_{F^2(\varphi)\to L^2(\varphi)}\simeq \sup_{z\in \Cn} \left\| \left(I-P \right)(fk_z) \right\|.
\end{equation}
Further, $H_f\in {\mathcal K}(F^2(\varphi),  L^2(\varphi))$ if and only if
$$
	\lim_{z\to \infty} \left\| \left(I-P \right)(fk_z) \right\| =0.
$$
\end{theorem}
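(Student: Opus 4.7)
The plan is to use Theorem~\ref{bounded} as a bridge: since $H_f(k_z) = (I-P)(fk_z)$ by the very definition of the Hankel operator, the quantity $\sup_z\|(I-P)(fk_z)\|$ in the statement is exactly $\sup_z\|H_f(k_z)\|$, and similarly for the limit at infinity. So it suffices to show
\[
	\|H_f\|_{F^2(\varphi)\to L^2(\varphi)}\simeq \sup_{z\in \Cn}\|H_f(k_z)\|,
\]
and the analogous equivalence for compactness, with $f\in \BDA$ (resp.\ $\VDA$) sandwiched in the middle via Theorem~\ref{bounded}.

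For the easy direction, if $H_f\in\mathcal{B}(F^2(\varphi),L^2(\varphi))$, then $\|H_f(k_z)\|\le \|H_f\|\cdot\|k_z\|\le C\|H_f\|$ by the normalization estimate~\eqref{nor-bergman}, giving the upper bound on the supremum. For the converse direction, the work has already been done inside the proof of Theorem~\ref{bounded}: the key estimate~\eqref{key-estimate} asserts that, for $0<r\le r_0$,
\[
	G_r(f)(z)\le C\,\|H_f(k_z)\|,\quad z\in\Cn.
\]
Hence $\sup_z\|H_f(k_z)\|<\infty$ forces $f\in\BDA$ with $\|f\|_{\BDA}\le C\sup_z\|H_f(k_z)\|$, and then Theorem~\ref{bounded} yields $H_f\in\mathcal{B}(F^2(\varphi),L^2(\varphi))$ together with $\|H_f\|\simeq\|f\|_{\BDA}\le C\sup_z\|H_f(k_z)\|$, which closes the chain of inequalities.

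For the compactness statement, the forward direction uses the second part of~\eqref{k-z-est}: the normalized reproducing kernels $k_z$ converge to $0$ weakly in $F^2(\varphi)$ as $|z|\to\infty$ (they are uniformly bounded in norm by~\eqref{nor-bergman} and tend to $0$ on compacta). A compact operator sends a weakly null sequence to a norm-null sequence, so $\|H_f(k_z)\|\to 0$. Conversely, if $\|H_f(k_z)\|\to 0$ as $z\to\infty$, then by~\eqref{key-estimate} the function $G_r(f)(z)\to 0$ as $z\to\infty$ for $0<r\le r_0$, so $f\in\VDA$, and Theorem~\ref{bounded} delivers compactness of $H_f$.

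There is no serious obstacle here: the two non-trivial ingredients, namely the pointwise estimate $G_r(f)(z)\lesssim\|H_f(k_z)\|$ and the $\BDA/\VDA$ characterizations, are already proved. The only point requiring a little care is the weak convergence $k_z\rightharpoonup 0$ as $|z|\to\infty$, which follows from the uniform boundedness $\|k_z\|\simeq 1$ together with the uniform-on-compacta decay in~\eqref{k-z-est} and the density of $\Gamma=\linspan\{K_w:w\in\Cn\}$ in $F^2(\varphi)$.
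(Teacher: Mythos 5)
Your proof is correct and takes essentially the same route as the paper: both directions are deduced from Theorem~\ref{bounded} together with the key estimate~\eqref{key-estimate} for the nontrivial implication, and the trivial implication from the bound $\|k_z\|\lesssim 1$. The paper merely abbreviates the compactness half with ``can be proved similarly,'' whereas you spell out the weak-null argument for $k_z$, which is exactly the intended reasoning.
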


\begin{proof} Suppose $f\in \mathcal S$. If $H_f\in {\mathcal B}(F^2(\varphi), L^2(\varphi))$, then trivially
$$
	\left\| \left(I-P \right)(fk_z) \right\| \le \|H_f\|_{F^2(\varphi)\to
	L^2(\varphi)} \|k_z\|\le C  \|H_f\|_{F^2(\varphi)\to L^2(\varphi)}.
$$
Conversely, by Theorem \ref{bounded} and the estimate  (\ref{key-estimate}), we have
$$
	\|H_f\|_{F^2(\varphi)\to L^2(\varphi)}
	\simeq  \| G_{r}(f)\|_{L^\infty}
	\le   \sup_{z\in \Cn} \left\| \left(I-P \right)(fk_z) \right\|.
$$
The other equivalence can be proved similarly.
\end{proof}

For a fixed $z\in \Cn$, define the shift  $\tau_z:\Cn \to \Cn$ by $\tau_z(w)= w+z$.  Then $f\mapsto f\circ \tau_z(w)= f(w+z)$ for $w\in\Cn$. In the classical Segal-Bargmann space $F^2$, we have
\begin{equation}\label{shift-a}
	\left\| H_f (k_z)  \right\| = \left\|  (I-P)\left( f\circ \tau_z \right)  \right\|
\end{equation}
for all $f\in \mathcal S$ (see Corollary 1.1 in \cite{Bauer}). Using this observation, we obtain the following corollary.

\begin{corollary}\label{Stroethoff-a}
Let $\varphi(z)=\frac \alpha 2 |z|^2$. For $f\in \mathcal S$,  $H_f\in {\mathcal B}(F^2(\varphi) , L^2(\varphi) )$ if and only if
$
  \sup_{z\in \Cn} \left\| \left(I-P \right)(f\circ \tau_z) \right\|<\infty
$ with the norm estimate
\begin{equation}\label{bounded-n}
	\|H_f\|_{F^2(\varphi)\to
	L^2(\varphi)}\simeq \sup_{z\in \Cn} \left\| \left(I-P \right)(f\circ \tau_z) \right\|.
\end{equation}
Further, $H_f\in {\mathcal K}(F^2(\varphi),  L^2(\varphi))$ if and only if
$$
	\lim_{z\to \infty} \left\| \left(I-P \right)(f\circ \tau_z) \right\|=0.
$$
\end{corollary}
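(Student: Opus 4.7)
The corollary is essentially an immediate consequence of Theorem~\ref{Stroethoff} combined with the identity \eqref{shift-a}, which is the special feature of the classical Segal-Bargmann weight $\varphi(z)=\frac{\alpha}{2}|z|^2$. The plan is therefore just to transport the characterization from Theorem~\ref{Stroethoff} through this identity.

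First, I would note that $\varphi(z)=\frac{\alpha}{2}|z|^2$ satisfies the hypothesis $\mathrm{i}\partial\overline{\partial}\varphi\simeq\omega_0$ (in fact equality up to the constant $\alpha/2$), so Theorem~\ref{Stroethoff} applies. By the very definition of the Hankel operator,
$$
H_f(k_z)=(I-P)M_f(k_z)=(I-P)(fk_z),
$$
which gives the trivial but crucial identity $\|H_f(k_z)\|=\|(I-P)(fk_z)\|$ for every $z\in\Cn$ and every $f\in\mathcal{S}$.

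Second, I would invoke the identity \eqref{shift-a}, stated in the text as Corollary~1.1 of~\cite{Bauer}; it expresses the unitary intertwining provided by the Weyl translation operators on $F^2$ and reads $\|H_f(k_z)\|=\|(I-P)(f\circ\tau_z)\|$. Combining this with the previous observation yields
$$
\|(I-P)(fk_z)\|=\|(I-P)(f\circ\tau_z)\|\qquad(z\in\Cn,\ f\in\mathcal{S}).
$$

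Finally, the boundedness part of the corollary together with the norm estimate \eqref{bounded-n} follows at once by taking the supremum over $z\in\Cn$ in this identity and applying Theorem~\ref{Stroethoff}. The compactness statement follows in exactly the same way by taking the limit as $z\to\infty$ and quoting the compactness half of Theorem~\ref{Stroethoff}. There is no genuine obstacle here, since the substantive analytic content is already packaged into Theorem~\ref{Stroethoff} and into the cited identity \eqref{shift-a}; the only point to emphasize is that this transport from shifts by $k_z$ to shifts by $\tau_z$ is available precisely because of the unitarity of the Weyl operators on the standard Gaussian Fock space, and it is not expected to persist for the general weights treated in Theorem~\ref{Stroethoff}.
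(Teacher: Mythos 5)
Your proposal is correct and follows essentially the same route as the paper: the paper likewise deduces the corollary from Theorem~\ref{Stroethoff} by invoking the identity \eqref{shift-a} (Corollary~1.1 of \cite{Bauer}) to replace $\|H_f(k_z)\| = \|(I-P)(fk_z)\|$ with $\|(I-P)(f\circ\tau_z)\|$, then takes the supremum or the limit as $z\to\infty$.
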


\begin{remark}
(i) Notice that the preceding result on compactness implies the analogous results in~\cite{HaVi20} and \cite{St92} where the symbols were assumed to be bounded.

(ii) It should be noted that the norm estimate in~\eqref{bounded-n} is similar to the estimate in Luecking's main result for the Bergman space (and also for the  standard weights); see Thereom~1(b) of~\cite{Lu92}.

(iii) It would be interesting to know whether the previous corollary remains true for more general weights.
\end{remark}

\section{Proof of Theorem~\ref{S-p-criteria}}\label{proof of S-p-criteria}
In this section we prove our characterization of Schatten class Hankel operators. For this purpose we need one more lemma.
Given $a\in \Cn$ and  $r>0$, let $L^2(B(a, r), e^{-2\varphi} dv)$  be the Lebesgue space on $B(a, r)$ with respect to measure $e^{-2\varphi} dv$, and let  $A^2(B(a, r), e^{-2\varphi} dv)$ be the weighted Bergman space of all holomorphic functions in the space $L^2(B(a, r), e^{-2\varphi} dv)$. We denote by $P_{a, r}$ the orthogonal projection of $L^2(B(a, r), e^{-2\varphi} dv)$ onto $A^2(B(a, r), e^{-2\varphi} dv)$.

Given $f\in L^2(B(a, r), e^{-2\varphi} dv)$, we extend $P_{a, r}(f)$ to $\Cn$ by setting
$$
	P_{a, r}(f)|_{\Cn\backslash B(a, r)} =0.
$$
It is easy to verify that
$$
	P_{a, r}^2 f= P_{a, r}f \quad \textrm{ and } \quad
	\langle f,  P_{a, r}g\rangle  = \langle P_{a, r} f, g\rangle
$$
for $f, g\in L^2(\varphi)$.

\begin{lemma}\label{local-projection}
For $f, g\in L^2(\varphi)$, it holds that
\begin{equation}\label{local-proj-a}
	\langle f-Pf, \, \chi_{B(a , r)} g-P_{a , r} g \rangle
	= \langle f- P_{a , r} f, \,  \chi_{B(a , r)} g-P_{a , r} g \rangle.
\end{equation}
\end{lemma}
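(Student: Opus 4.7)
The plan is to subtract the two sides and show the difference vanishes. Concretely, the claim~\eqref{local-proj-a} is equivalent to
$$
\bigl\langle P_{a,r}f - Pf,\; \chi_{B(a,r)} g - P_{a,r} g \bigr\rangle = 0,
$$
so by linearity it suffices to prove the two separate orthogonality identities
\begin{equation*}
\bigl\langle Pf,\; \chi_{B(a,r)} g - P_{a,r} g \bigr\rangle = 0
\quad\text{and}\quad
\bigl\langle P_{a,r}f,\; \chi_{B(a,r)} g - P_{a,r} g \bigr\rangle = 0.
\end{equation*}

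The key observation is that the vector $h := \chi_{B(a,r)} g - P_{a,r} g$ is supported in $B(a,r)$, because $P_{a,r}g$ is defined to vanish off $B(a,r)$. Moreover, restricted to $B(a,r)$, the function $h$ equals $g - P_{a,r}g$, which by definition of the local Bergman projection lies in the orthogonal complement of $A^2(B(a,r), e^{-2\varphi} dv)$ inside $L^2(B(a,r), e^{-2\varphi} dv)$. Consequently, for any $\phi\in L^2(\varphi)$ whose restriction to $B(a,r)$ belongs to $A^2(B(a,r), e^{-2\varphi} dv)$, we have
$$
\langle \phi, h\rangle_{L^2(\varphi)}
= \int_{B(a,r)} \phi\, \overline{g - P_{a,r}g}\, e^{-2\varphi}\, dv
= \bigl\langle \phi|_{B(a,r)},\; g - P_{a,r}g \bigr\rangle_{L^2(B(a,r),\, e^{-2\varphi} dv)} = 0.
$$

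It remains to check that $\phi = Pf$ and $\phi = P_{a,r}f$ both satisfy this membership requirement. For $Pf$, since $Pf\in F^2(\varphi)$ is entire and globally $L^2(\varphi)$-integrable, its restriction to $B(a,r)$ is holomorphic and square integrable against $e^{-2\varphi}dv$ there, hence lies in $A^2(B(a,r), e^{-2\varphi} dv)$. For $P_{a,r}f$, this is immediate by the very definition of the local projection. Applying the orthogonality displayed above to each of these two choices of $\phi$ yields the two identities and hence~\eqref{local-proj-a}.

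There is no real obstacle here; the only point requiring a moment's care is the bookkeeping around the extension-by-zero convention for $P_{a,r}$, which is what makes the inner product on $L^2(\varphi)$ collapse to an inner product on $L^2(B(a,r), e^{-2\varphi} dv)$ and lets the local orthogonality do the work.
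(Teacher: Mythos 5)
Your proof is correct and takes essentially the same approach as the paper: both arguments hinge on the two orthogonality facts $\langle Pf,\,\chi_{B(a,r)}g - P_{a,r}g\rangle = 0$ (since $Pf|_{B(a,r)}\in A^2(B(a,r),e^{-2\varphi}dv)$, equivalently $P_{a,r}(Pf)=\chi_{B(a,r)}Pf$) and $\langle P_{a,r}f,\,\chi_{B(a,r)}g - P_{a,r}g\rangle = 0$ (by definition of $P_{a,r}$). You organize the computation by subtracting the two sides first and showing each piece vanishes, whereas the paper chains the two orthogonalities directly, but the underlying reasoning is identical.
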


\begin{proof}
For $ h \in F^2(\varphi)$, it is trivial that
$
   P_{a , r} (h)= \chi_{B(a , r)} h
$.
Then for $f, g\in L^2(\varphi)$, we have $\langle h, \chi_{B(a , r)} g-P_{a , r} g\rangle =0$, and hence
\begin{align*}
	\langle f-Pf, \,\chi_{B(a , r)} g-P_{a , r} g \rangle
	&=\langle \chi_{B(a , r)} f, \, \chi_{B(a , r)} g-P_{a , r} g \rangle\\
   	&=\langle \chi_{B(a , r)} f- P_{a , r} f, \, \chi_{B(a , r)} g-P_{a , r} g \rangle.
\end{align*}
From this (\ref{local-proj-a}) follows.
\end{proof}

\begin{proof}[Proof of Theorem~\ref{S-p-criteria}]
\textbf{ (B)$\Rightarrow$(A).} For $f\in \IDA^{p}$, decompose $f$ as in \eqref{decomp-a}.
Notice that by Lemma \ref{IDA-Hankel}, both  $H_{f_1}$ and $H_{f_2}$ are well defined on $F^2(\varphi)$.
We claim that
\begin{equation}\label{H-f1-e}
	  \left\| H_{f_1}\right\|_{S_p} +  \left\| H_{f_2}\right\|_{S_p}  \le C \|f\|_{\mathrm{IDA}^{p}},
\end{equation}
 which gives the desired estimate
\begin{equation} \label{Schatten-p-a}
	\left\| H_{f}\right\|_{S_p} \le  C\left( \left\| H_{f_1}\right\|_{S_p} +  \left\| H_{f_2}\right\|_{S_p}\right)\le C \|f\|_{\mathrm{IDA}^{p}}.
\end{equation}
To prove (\ref{H-f1-e}), set $\phi= |\overline \partial f_1|$ or $\phi= |f_2|$. By Theorem~\ref{Sp-Toep}, the positive Toeplitz operator $T_{|\phi|^2}$ is in $S_{\frac p2}$. Now we consider the multiplication operator $M_\phi: F^2(\varphi) \to L^2(\varphi)$ defined as
$$
	M_\phi(f)= \phi f.
$$
For $\phi= |\overline \partial f_1|$, using the inequality in \eqref{norm-f1g-a} with the exponential $\frac{2p}{2+p} <2 $ and H\"older's inequality, for $g\in F^2(\varphi)$ we have
\begin{align*}
	\left\|g |\overline{\partial} f_1|\right\|^{\frac{2p}{2+p}}
	&\leq C \int_{\Cn} |g e^{-\varphi}|^{\frac{2p}{2+p}} M_{2, r}(|\overline \partial f_1|)^{\frac{2p}{2+p}} dv \\
	&\leq C \|g\|^{\frac{2p}{2+p}} \left\{\int_{\Cn} M_{2, r}(|\overline \partial f_1|)^{p}dv\right\}^{\frac{2}{2+p}}.
\end{align*}
Similar estimate holds for $\phi=|f_2|$. Thus, $ M_\phi$ is bounded  from $F^2(\varphi)$ to $L^2(\varphi)$.
Now, for $g, h\in F^2(\varphi)$, it holds that
$$
	\langle M_\phi^* M_\phi g, h   \rangle = \langle M_\phi g, M_\phi h   \rangle
	= \int_{\Cn} g \overline h |\phi|^2 dv = \langle  T_{|\phi|^2} g, h\rangle,
$$
which in turn gives $M_\phi^* M_\phi= T_{|\phi|^2}$. Thus,  $ M_\phi \in S_{ p}$, and applying Theorem~\ref{IDA-space}, we get
$$
	\|M_\phi\|_{S_p}\le C \|M_{2, r}(\phi)\| _{L^p} \le C  \|f\|_{\textrm{IDA}^{p}}^p.
$$
This together with \eqref{kernel-image-d} give the norm estimate in \eqref{H-f1-e}.

\textbf{(A)$\Rightarrow$(B).} Suppose $f\in \mathcal S$ and  $H_f\in {S_p}(F^2(\varphi), L^2(\varphi))$. We will prove that
\begin{equation}\label{less-than-Sp-norm}
	\|f\|_{\textrm{IDA}^{p}}\le  C \|H_f\|_{S_p}.
\end{equation}
For this purpose, we borrow an idea from the proof of Proposition~6.8 in~\cite{FX18}. By Remark~\ref{independence}, it suffices to prove~\eqref{less-than-Sp-norm} for some $r\in (0, r_0)$, where $r_0$ is as in Lemma~\ref{basic-est}. To do this, let $\Lambda$ be an $r$-lattice as in \eqref{lattice-c}, and decompose $\Lambda=\cup_k \Lambda_k$ as in (\ref{Omega-docomposition}) with $K\ge 2\sqrt{n}$  so that $B(a , r)\cap B(b , r)= \emptyset$ if $a\neq b$ and
$a, b\in \Lambda _k$.  

We deal with the case $0<p\le 1$ first. Since
$$
	H_f\in {S_p}(F^2(\varphi), L^2(\varphi))\subset \mathcal B (F^2(\varphi), L^2(\varphi)),
$$
Theorem  \ref{Stroethoff} shows that $fk_a-P(fk_a) \in L^2_{\mathrm{loc}}$. Clearly $P(fk_a) \in H(\Cn)$, so
 $$
 fk_a\in L^2(B(a, r), e^{-2\varphi}dv)  \ \textrm{ and } \  P_{a, r}(fk_a)\in A^2(B(a, r), e^{-2\varphi}dv),
 $$
which implies that $\|\chi_{B(a, r)} fk_{a}-P_{a, r} (fk_{a})\|<\infty$.  Now for $a\in \Lambda_k$, set
\begin{equation*}
g_a= \begin{cases}
	\frac {\chi_{B(a , r)} fk_{a}-P_{a, r} (fk_{a })}{\|\chi_{B(a, r)} fk_{a}-P_{a, r} (fk_{a})\|}
	& \textrm { if } \|\chi_{B(a, r)} fk_{a}-P_{a, r} (fk_{a})\|\neq 0, \\
	0  & \textrm { if } \|\chi_{B(a, r)} fk_{a}-P_{a, r} (fk_{a})\|= 0.
\end{cases}
\end{equation*}
It is easy to see that $\|g_a\|\le 1$ and $\langle g_{a}, g_{b} \rangle=0 $ if $B(a , r)\cap B(b , r)= \emptyset$. Let $J$ be any finite sub-collection of $\Lambda_k$,
and let $\{e_a\}_{a\in J} $ be an orthonormal set of $L^2(\varphi)$. Define
$$
	A= \sum_{a\in J} e_a\otimes g_a:  \ L^2(\varphi)\to L^2(\varphi).
$$
It is trivial to see that $A$ is of finite rank and
\begin{equation}\label{A-operator}
	\|A\|_{L^2(\varphi)\to L^2(\varphi)}\le  1.
\end{equation}

We now define another operator $T: L^2(\varphi)\to F^2(\varphi)$ by
$$
	T= \sum_{a\in J}  k_{a}\otimes e_a.
$$
Since  $\Lambda$ is separated, by Lemma \ref{tensor-product}, there is a constant $C$ depending only on $n$ and $r$ such that
\begin{equation}\label{bounded-T}
	\|T\|_{L^2(\varphi)\to F^2(\varphi)}\le C.
\end{equation}
It is easy to verify  that
\begin{equation}\label{AHT-a}
 	AH_fT= \sum_{a, \tau \in J} \left \langle  H_f k_{ \tau}, g_a \right \rangle e_a\otimes e_\tau
	= Y+Z,
\end{equation}
where
\begin{equation}\label{AHT}
	Y =  \sum_{a\in J } \left \langle  H_f k_{a}, g_a \right\rangle e_a\otimes e_a,\ \
	Z = \sum_{a, \tau \in J, \, a\neq\tau} \left \langle  H_f k_{ \tau}, g_a\right \rangle e_a\otimes e_\tau.
\end{equation}
By Lemma \ref{local-projection} and Lemma \ref{basic-est},
\begin{align*}
	\left  \langle  H_f k_{a }, g_a \right \rangle
	&=\left \langle   f k_{a }-P(f k_{a}),   g_a \right \rangle\\
	&=\left \langle \chi_{B(a  , r)}   f k_{a }-P_{a , r}(f k_{a }), g_a \right \rangle\\
	&=\left \|  \chi_{B(a, r)}   f k_{a }-P_{a , r}(f k_{a }) \right\|\\
	&\ge C \left \|   f-\frac 1 {  k_{a } }P_{a , r}(f k_{a }) \right\|_{L^2(B(a , r), dv)}.
\end{align*}
Further, by definition,
$$
	\left  \langle  H_f k_{a }, g_a \right \rangle \ge C \left \|   f-\frac 1 {  k_{a } }P_{a , r}(f k_{a })\right\|_{L^2(B(a , r), dv)} \ge C G_{r}(f)( a ).
$$
Thus, there is an $N$ independent of $f$ and $J$ such that
\begin{equation}\label{Y-est-a}
	\|Y\|_{S_p} ^p
	=\sum_{ a  \in J }\left \langle  H_f k_{a }, g_a \right\rangle^p
	\ge  N \sum_{ a \in J}  G_{r}(f)( a )^p.
\end{equation}
On the other hand, for $0<p\le 1$, applying Lemma 5 of \cite{Lu87} gives
\begin{equation}\label{Z-est}
	\left\|Z\right\|_{S_p} ^p
	\le  \sum_{a, \tau \in J, \, a\neq \tau} \left| \left \langle  H_f k_{ \tau}, g_a \right\rangle \right|^p.
\end{equation}
Let $Q_{a, r}$ be the Bergman projection of $L^2(B(a, r), dv)$ onto the Bergman space $A^2(B(a, r), dv)$. Then
$$
	k_\tau Q_{a, r}(f)\in A^2(B(a, r), dv)= A^2(B(a, r), e^{-2\varphi} dv),
$$
and further $fk_\tau -P_{a, r}(fk_\tau)$ and $P_{a, r}(fk_\tau)- k_\tau Q_{a, r}f $ are orthogonal in the space $L^2(B(a, r), e^{-2\varphi} dv)$.  Thus, for    $a, \tau\in \Cn$,   by Parseval's identity, we get
$$
	\left\|  fk_\tau -P_{a, r}(fk_\tau)\right\|_{L^2(B(a, r), e^{-2\varphi} dv)}
	\le \left\|  fk_\tau -k_\tau Q_{a, r}f\right\|_{L^2(B(a, r), e^{-2\varphi} dv)}.
$$
Hence, by Lemma \ref{local-projection},
\begin{align*}
	\left| \left \langle  H_f k_{ \tau}, g_a \right\rangle \right|
	&=\left| \left \langle   f k_{\tau} -P \left( f k_{ \tau}  \right), g_a  \right\rangle \right|\\
	&=\left| \left \langle  \chi_{B(a, r)} f k_{ \tau} -P_{a , r} \left( f k_{ \tau}\right), g_a \right\rangle \right|\\
	&\le\left\|  f k_{ \tau}-P_{a, r}  \left( f k_{ \tau} \right) \right\|_{L^2(B(a, r), e^{-2\varphi} dv)}\\
	&\le\left\|  f k_{ \tau}- k_{ \tau} Q_{a, r}  \left(f\right) \right\|_{L^2(B(a, r), e^{-2\varphi} dv)}\\
	&\le\sup_{\xi\in B(a, r)} \left|k_{ \tau}(\xi)e^{-\varphi}\right |\, \left\|  f-  Q_{a , r}  \left( f  \right) \right\|_{L^2(B(a, r), dv)}\\
	&\le C e^{-\theta | a -  \tau|} \left\| f- Q_{a , r}  \left( f  \right) \right\|_{L^2(  B(a, r), dv)}.
\end{align*}
Notice also that
$$
	\left\|  f - Q_{a , r}  \left( f  \right) \right\|_{L^2(  B(a  , r), dv)} = G_{r}(f)(a).
$$
Since  $B  (  {\tau} , r)\cap B  ( a , r)= \emptyset$ for $a, \tau\in J$ with $a\neq \tau$,
\begin{align*}
	\sum_{\tau \in J, \, \tau \neq a }e^{-\frac {p\theta} 2| a - \tau|}
	&\le C \sum_{\tau \in J, \, \tau \neq a } \int_{B  (  {\tau} , r)}   e^{-\frac {p\theta} 2| a - \xi|} dv(\xi)\\
	&\le C \int_{\Cn}e^{-\frac {p\theta} 2| \xi|} dv(\xi) =C.
\end{align*}
Therefore, by \eqref{Omega-docomposition} and \eqref{k-z-est},
\begin{align*}
	\left\|Z\right\|_{S_p} ^p
	&\le\sum_{a, \tau \in J, \, a\neq \tau}   e^{-p \theta| a -  \tau|} G_{r}(f)(a)^p\\
	&\le\sum_{a \in J }    G_{r}(f)(a)^p \sum_{\tau \in J, \, \tau \neq a }e^{-p \theta| a- \tau|}\\
	&\le e^{-\frac {p\theta}{2 \sqrt{n}} Kr} \sum_{a\in J}     G_{r}(f)(a)^p \sum_{\tau\in J, \, \tau\neq a  }e^{-\frac{p\theta} 2| a - \tau|}\\
	&\le C  e^{-\frac{p\theta}{2 \sqrt{n}} Kr} \sum_{a\in J}    \,  G_{r}(f)(a)^p,
\end{align*}
and hence, we can pick some $K$ sufficiently large so that
\begin{equation}\label{Z-est-z}
\left \|Z\right\|_{S_p} ^p \le \frac {N} 4 \sum_{ a\in J}  G_{r}(f)( a )^p.
\end{equation}
Using the estimate
$$
	\left\| Y\right \|_{S_p}^p\le 2 \left \| AH_fT\right\|_{S_p}^p+  2 \left\| Z\right\|_{S_p}^p
$$
(see (6.9) in \cite{FX18} for example), we see that
$$
	N \sum_{ a\in J}  G_{r}(f)( a )^p\le 2 \left \| AH_fT\right \|_{S_p}^p + \frac {N} 2  \sum_{a\in J}  G_{r}(f)( a )^p.
$$
Since $J$  is finite,  we have
\begin{equation}\label{AHf-T-norm}
	{N}  \sum_{j}  G_{r}(f)( a )^p\le 4 \left\| AH_fT\right\|_{S_p}^p,
\end{equation}
which can be further estimated, using \eqref{bounded-T}, as follows
\begin{equation}\label{aaa}
	\left\| AH_fT\right\|_{S_p}^p
	\le  \| A\|_{L^2(\varphi)\to L^2(\varphi)}^p \| H_f  \|_{S_p}^p  \|T\|_{L^2(\varphi)\to F^2(\varphi)}^p
	\le  C\left \| H_f \right \|_{S_p}^p.
\end{equation}
Puting (\ref{AHf-T-norm}) and (\ref{aaa}) together and applying the duality between $l^1$ and $l^\infty$, we obtain
$$
	\sum_{ a \in J}  G_{r}(f)( a )^p \le C \left\| H_f \right \|_{S_p}^p.
$$
The constants $C$ above are all independent of $f$ and $J$. Hence,
\begin{equation}\label{G-f-est-a}
	\sum_{ a\in \Lambda_k }  G_{r}(f)( a)^p \le C \left\| H_f \right \|_{S_p}^p.
\end{equation}
Now take $\Lambda$ to be an $\frac r {2}$-lattice similar to \eqref{lattice-c},  which can be viewed as a union of $4^n$ $r$-lattice. Then
\begin{align*}
	\int_{\Cn} G_{\frac r2}(f)^p dv
	&\le \sum_{ a\in \Lambda } \int_{B(a, \frac r2)} G_{\frac r2}(f)^p dv\\
	&\le C  \sum_{ a\in \Lambda } \sup_{z\in B(a, \frac r2)} G_{\frac r2}(f)(z)^p\\
	&\le C  \sum_{ a\in \Lambda }G_{r}(f)(a)^p\le  C\left \| H_f \right \|_{S_p}^p,
\end{align*}
and so, for $0<r\le r_0$, we have
$$
	\int_{\Cn} G_{\frac r2}(f)^p dv \le  C \left \| H_f \right \|_{S_p}^p.
$$
Therefore, by Theorem~\ref{IDA-space}, for each $r >0$, it holds that
\begin{equation}\label{G-f-est}
	\int_{\Cn} G_{r}(f)^p dv \le C  \left \| H_f \right \|_{S_p}^p.
\end{equation}

Now we treat the case $1\le p<\infty$. Let $\{e_a: a\in \Lambda_k\}$ be an orthonormal basis of $F^2(\varphi)$ and define linear operators  $T$ and $B$ by setting
$$
	T= \sum_{a\in \Lambda}  k_{a}\otimes e_a:\ L^2(\varphi)\to F^2(\varphi),
$$
and
$$
	B= \sum_{a\in \Lambda}  g_{a}\otimes e_a:\ L^2(\varphi)\to L^2(\varphi)
$$
where
$$
	g_a=\begin{cases}
	\frac {\chi_{B(a , r)} H_f(k_{a })}{ \| \chi_{B(a, r)} H_f(k_{a})\|} & \textrm { if } \| \chi_{B(a, r)} H_f(k_{a})\|\neq 0 \\
	0, & \textrm { if } \| \chi_{B(a, r)} H_f(k_{a})\|= 0.
	\end{cases}
$$
Since $\|g_a\|\le 1$ and $ \langle g_a, g_\tau \rangle=0$ when $a\neq \tau$, it follows that
$$
	\|B\|_{L^2(\varphi)\to L^2(\varphi)} \le 1.
$$
For $H_f\in S_p$, by Theorem \ref{Stroethoff}, we have $\lim_{z\to \infty}
\| \chi_{B(z, r)} H_f(k_{z})\|=0$.
Since
$$
	\left\langle  B^* M_{\chi_{B(a, r)}} H_f T e_a, e_a  \right \rangle
	= \left\langle    \chi_{B(a, r)} H_fT(e_{a}), B(e_{a}) \right \rangle
	= \| \chi_{B(a, r)} H_f(k_{a})\|,
$$
and
$$
	\left\langle  B^* M_{\chi_{B(a, r)}} H_f T e_a, e_b  \right \rangle =0 \ \ \textrm{for}\ a\neq b,
$$
$ B^* M_{\chi_{B(a, r)}} H_f T$ is a compact  positive  operator on $L^2(\varphi)$. Theorem 1.27 of \cite{Zh07} yields
$$
	\sum_{a\in \Lambda_k} \left|\left \langle  B^* M_{\chi_{B(a, r)}} H_f T e_a, e_a \right\rangle \right|^p \le \left\|  B^* M_{\chi_{B(a, r)}} H_f T \right\|_{S_p}^p \le C \left \|    H_f  \right\|_{S_p}^p.
$$
Thus, using (\ref{key-estimate}), we have
\begin{align*}
	\sum_{a\in \Lambda_k}   G_{r}(f)(a)^p
	&\le  C \sum_{a\in \Lambda_k}  \| \chi_{B(a, r)} H_f(k_{a})\|^p \\
	&=\sum_{a\in \Lambda_k} \left|\left \langle  B^*M_{\chi_{B(a, r)}} H_f T e_a, e_a \right\rangle \right|^p \le C \left \|    H_f  \right\|_{S_p}^p
\end{align*}
which gives~\eqref{G-f-est-a} for $1\le p<\infty$. From this, with the same approach as in the other case, we obtain the desired conclusion in \eqref{G-f-est}.

\textbf{(B)$\Rightarrow$(C).}
Suppose $f\in \mathrm{IDA}^{p}$, and decompose $f=f_1+f_2$ as in the implication (B)$\Rightarrow$(A).

For  $0<p\le 2$,  applying (\ref{f_1-estimate}) and Lemma \ref{integral-est} for $d\mu=  \left|{\overline \partial} f_1\right|^2 dv$, we get
\begin{align*} \label{kernel-image-a}
\begin{split}
	&\int_{\Cn} \left\|H_{f_1}(k_z)\right\|^p dv \\
	&\le C \int_{\Cn} \left (\int_{\Cn} \left |k_z(\xi) e^{-\varphi(\xi)}\right| ^2  \left|{\overline \partial} f_1(\xi)\right|^2 dv(\xi)\right)^{\frac p 2} dv(z)\\
	&\le  C \int_{\Cn}  dv(z) \int_{\Cn} \left |k_z(\xi) e^{-\varphi(\xi)}\right| ^p  M_{2, r }(\overline \partial f_1)(\xi) ^p dv(\xi) \\
	&\le  C  \int_{\Cn} M_{2, r }(\overline \partial f_1)(\xi) ^p dv(\xi).
\end{split}
\end{align*}
This gives
\begin{equation} \label{Schatten-p-k}
 \int_{\Cn} \left\|H_{f_1}(k_z)\right\|^p dv \le C \int_{\Cn} M_{2, r }(\overline \partial f_1)(\xi) ^p dv(\xi)\le C  \left\|   f \right\|_{\mathrm{IDA}^{p}}^p.
\end{equation}
Similarly,
\begin{align*}
	\|  H_{f_2}(k_z)\|^p
  	&\le C \left(\int_{\Cn} \left|k_z e^{-\varphi} \right|^2 M_{2, r}(f_2)^2 dv \right)^{\frac p 2}\\
	&\le C \int_{\Cn} \left| k_z e^{-\varphi} \right|^p M_{2, 2r}(f_2)^p dv.
\end{align*}
Integrating both sides with respect to $z$ over $\Cn$, we get
\begin{equation} \label{Schatten-p-m}
	\int_{\Cn}  \|  H_{f_2}(k_z)\|^p dv(z)\le  C \int_{\Cn}   M_{2, 2r}(f_2)^p dv\le C  \left\|   f \right\|_{\mathrm{IDA}^{p}}^p.
\end{equation}

For $2\le p<\infty$, by \eqref{f_1-estimate},
\begin{align*}
	\|H_{f_1}(k_z)\|^p
	&\le C \left \langle  |\overline \partial f_1|^2 k_z, k_z  \right\rangle ^{\frac p 2}\\
	&= C \left \langle T_{ |\overline \partial f_1|^2} k_z, k_z  \right\rangle ^{\frac p 2} \le C \left \langle \left( T_{ |\overline \partial f_1|^2}\right )^{\frac p 2} k_z, k_z  \right\rangle.
\end{align*}
Therefore,
\begin{equation} \label{Schatten-p-n1}
\begin{split}
	\int_{\Cn} \|H_{f_1}(k_z)\|^p dv(z)&\le C \int_{\Cn} \left \langle \left( T_{ |\overline \partial f_1|^2}\right )^{\frac p 2} k_z, k_z  \right\rangle dv(z)\\
 	&\le C \left\| T_{ |\overline \partial f_1|^2}\right\|_{S^{\frac p 2}}^{\frac p 2} \le C\left \| M_{2,  p}( |\overline \partial f_1|) \right \|_{L^p}\\
	&\le C \left\|   f \right\|_{\mathrm{IDA}^{p}}^p.
\end{split}
\end{equation}
Similarly,
$$
	\|H_{f_2}(k_z)\|^p  \le \left \langle  |  f_2|^2 k_z, k_z  \right\rangle ^{\frac p 2}  \le  \left \langle \left( T_{ |  f_2|^2}\right )^{\frac p 2} k_z, k_z  \right\rangle,
$$
and so
\begin{equation}\label{Schatten-p-n2}
\begin{split}
	\int_{\Cn} \|H_{f_2}(k_z)\|^p dv(z) &\le \left\|\left( T_{ |  f_2|^2}\right )^{\frac p 2} \right\|_{S_1}\\
	&\le   C\left \| M_{2,  p}(  | f_2|) \right \|_{L^p} \le C \left\|   f \right\|_{\mathrm{IDA}^{p}}^p.
\end{split}
\end{equation}
Combining the estimates \eqref {Schatten-p-k}--\eqref{Schatten-p-n2} gives
\begin{equation}\label {Schatten-p-o}
\int_{\Cn} \|H_{f}(k_z)\|^p dv(z) \le C \left\|   f \right\|_{\mathrm{IDA}^{p}}^p.
\end{equation}

\textbf{(C)$\Rightarrow$(B).} By \eqref{key-estimate} and \eqref{norm-euival}, we have
\begin{equation}\label {Schatten-p-p}
	\left\|   f \right\|_{\mathrm{IDA}^{p}}^p\le C \int_{\Cn} \|H_{f}(k_z)\|^p dv(z).
\end{equation}

Finally, the $S_p$-norm equivalence in \eqref{bounded-g} follows from (\ref{Schatten-p-a}), (\ref{G-f-est}), (\ref{Schatten-p-o}) and (\ref{Schatten-p-p}). The proof is completed.
 \end{proof}

Similarly to Corollary \ref{Stroethoff-a}, restricting to the classical Segel-Bargmann space, we have the following corollary, which was obtained only for $p=2$ in \cite{Bauer}.

 \begin{corollary}\label{standard-Fock}
 Suppose $\varphi(z)=\frac \alpha 2 |z|^2$ and $0<p<\infty$. Then for $f\in \mathcal S$, $H_f\in S_p$ if and only if $\int_{\Cn}\left \|(I-P)\left(f\circ \tau_z\right)\right\|^p dv(z)<\infty$. Furthermore,
\begin{equation}\label{Schatten-p-r}
 	\left\|H_f\right\|_{S_p} \simeq \left\{  \int_{\Cn}\left \|(I-P)\left(f\circ \tau_z\right)\right\|^p dv(z)\right\}^{\frac 1p}.
\end{equation}
\end{corollary}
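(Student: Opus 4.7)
The plan is to bootstrap Theorem \ref{S-p-criteria} to the specific setting $\varphi(z) = \frac{\alpha}{2}|z|^2$ via the translation identity \eqref{shift-a}, in direct parallel to how Corollary \ref{Stroethoff-a} was deduced from Theorem \ref{Stroethoff}.

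First, I would invoke the equivalence $(A) \Leftrightarrow (C)$ in Theorem \ref{S-p-criteria}. Since the standard weight $\varphi(z) = \frac{\alpha}{2}|z|^2$ satisfies $\mathrm{i}\partial\overline{\partial}\varphi \simeq \omega_0$ with $m = M = \alpha$, the theorem applies for every $0 < p < \infty$ and yields that $H_f \in S_p$ if and only if $\int_{\Cn} \|H_f(k_z)\|^p\, dv(z) < \infty$, together with the quasi-norm equivalence
\begin{equation*}
	\|H_f\|_{S_p} \simeq \left(\int_{\Cn} \|H_f(k_z)\|^p\, dv(z)\right)^{1/p}.
\end{equation*}

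Second, I would substitute the pointwise identity \eqref{shift-a}, namely $\|H_f(k_z)\| = \|(I-P)(f\circ \tau_z)\|$, which is valid for every $f \in \mathcal{S}$ and every $z \in \Cn$ in the classical Segal-Bargmann space (Corollary 1.1 of \cite{Bauer}). This identity reflects the unitary implementation of translations by Weyl operators, a structural feature peculiar to the Gaussian weight that fails for the general weights treated elsewhere in the paper. Plugging it directly into the integral above produces \eqref{Schatten-p-r} and the claimed characterization, and the second equivalence concerning compactness is obtained in the same way via the corresponding limit statement.

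The argument is a two-line reduction rather than a substantive proof, so there is no serious obstacle; all of the analytic work has already been carried out in Theorem \ref{S-p-criteria} at the level of general weights $\varphi$ with $\mathrm{i}\partial\overline{\partial}\varphi \simeq \omega_0$. The only point meriting care is that \eqref{shift-a} is being applied for symbols $f \in \mathcal{S}$ rather than merely $f \in L^\infty$, but this is precisely the generality in which Bauer establishes the identity in \cite{Bauer}, so no additional verification is required. The present corollary is therefore a clean specialization of Theorem \ref{S-p-criteria} to the standard Segal-Bargmann space, subsuming the $p=2$ case of \cite{Bauer}.
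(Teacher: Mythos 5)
Your argument is correct and is exactly the paper's intended proof: the paper derives the corollary by combining the equivalence $(A)\Leftrightarrow(C)$ of Theorem~\ref{S-p-criteria} with the translation identity \eqref{shift-a}, just as you do. One small slip: your final sentence about "the second equivalence concerning compactness" refers to nothing in Corollary~\ref{standard-Fock} (which has no compactness clause); you appear to have momentarily conflated it with Corollary~\ref{Stroethoff-a}, and the constant should read $m=M=\alpha/2$ rather than $\alpha$, though neither affects the validity of the reduction.
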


\section{Simultaneous membership of $H_f$ and $H_{\overline f}$ in $S_p$}\label{proof of Simul-S-p}

In the setting of the classical Segal-Bargmann space $F^2$, for $0<p<\infty$ and $f\in \mathcal S$, both $H_f$ and $H_{\bar f}$ are in $S_p$ if and only if
\begin{equation}\label{e:SD}
	\int_{\C^n} \left( SD(f\circ \tau_z)\right)^p dv(z)<\infty,
\end{equation}
where $\tau_z(w) = w+ z$ and the standard deviation $SD(g)$ of $g \in L^2(\C^n, d\mu)$ with $d\mu(z) = \pi^{-n} e^{-|z|^2}dv(z)$ is defined by
$$
	SD(g) = \left\{\int_{\C^n} \Big| g - \int_{\C^n} g\, d\mu \Big|^2 d\mu\right\}^{1/2}
	=\left\{\int_{\C^n} |g|^2\, d\mu - \Big| \int_{\C^n} g\, d\mu \Big|^2\right\}^{1/2}
$$
(see \cite{I2011, XZ2004}). In~\cite{F11}, the result for $1\le p<\infty$ was extended to symmetrically-normed ideals $S_{\Phi}$, that is, it was proved that $H_f, H_{\bar f}\in S_{\Phi}$ if and only if
\begin{equation}\label{e:SNI}
	\Phi\left( \{ J(f; u)\}_{u\in \Z^{2n}}\right) < \infty,
\end{equation}
where $\Phi$ is a symmetric norming function,
$$
	J(f; u) = \left\{ \int_{Q+u}\int_{Q+u} |f(z)-f(w)|^2 dv(w) dv(z)\right\}^{1/2},
$$
and $Q = \{ (x_1+iy_1, \ldots, x_n+iy_n) \in \C^n : x_1, y_1, \ldots x_n,y_n\in [-1, 2)\}$.

To compare the situation with weighted Segal-Bargmann spaces, we say that $f\in L^2_{\rm loc}$ is in $\IMO^{s}$ with $0<s\le\infty$ if $MO_{2, r} (f) \in L^s$ for some $r>0$, where
$$
	MO_{2,r}(f)(z) = \left( \frac1{|B(z,r)|} \int_{B(z,r)} |f - \widehat f_r(z)|^2\, dv\right)^{1/2}
$$
and $\widehat f_r$ is the average function defined on $\C^n$ by
$$
	\widehat f_r(z) = \frac1{|B(z,r)|} \int_{B(z,r)} f\, dv.
$$
For further details on these spaces, see~\cite{HW18}.

The following lemma shows the connection between $\IMO$ and $\IDA$.

\begin{lemma}\label{H-f-H-bar-f}
Suppose $ 0< p\leq \infty$. Then for  $f\in L^{2}_{{\rm loc}}(\Cn)$,  $f\in \mathrm{IDA}^{p}$ and $\overline f\in \mathrm{IDA}^{p}$  if and only if $f\in \mathrm{IMO}^{p}$. Furthermore,
$$
	\| f\|_{\mathrm{IDA}^{p}} +\|\overline f\|_{\mathrm{IDA}^{p}} \simeq \|f\|_{\mathrm {IMO}^{p}}.
$$
\end{lemma}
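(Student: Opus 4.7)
\textbf{Proof plan for Lemma~\ref{H-f-H-bar-f}.} Note that $\mathrm{IDA}^p$ and $\mathrm{IMO}^p$ are both defined purely in terms of unweighted $L^2$ averages on Euclidean balls, so the weight $\varphi$ never enters this lemma and the proof is a pointwise comparison of $G_r(f)$, $G_r(\bar f)$ and $MO_{2,r}(f)$ followed by taking $L^p$ norms.

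For the easy direction (namely $f \in \IMO^p \Rightarrow f, \bar f \in \IDA^p$), I would observe that constants are holomorphic on every ball, so choosing the constant $\hat f_r(z)$ as the competitor in the infimum defining $G_r(f)(z)$ gives
\[
G_r(f)(z) \le M_{2,r}\bigl(f - \hat f_r(z)\bigr)(z) = MO_{2,r}(f)(z).
\]
Since $\widehat{\bar f}_r = \overline{\hat f_r}$, one has $MO_{2,r}(\bar f) = MO_{2,r}(f)$, and hence the same estimate holds with $f$ replaced by $\bar f$. Taking $L^p$ norms yields $\|f\|_{\IDA^p} + \|\bar f\|_{\IDA^p} \lesssim \|f\|_{\IMO^p}$.

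The substantive direction is $f, \bar f \in \IDA^p \Rightarrow f \in \IMO^p$. Fix $z \in \C^n$, $\epsilon > 0$, and pick near-optimal approximants $h_1 \in H(B(z,r))$ of $f$ and $h_2 \in H(B(z,r))$ of $\bar f$; then $\bar h_2$ is antiholomorphic on $B(z,r)$ with $M_{2,r}(f - \bar h_2)(z) \le G_r(\bar f)(z) + \epsilon$. Since $h_1(z)$ is a constant and $MO_{2,r}(f)(z)$ is the $L^2$-infimum of $f - c$ over constants $c$, the triangle inequality gives
\[
MO_{2,r}(f)(z) \le M_{2,r}\bigl(f - h_1(z)\bigr)(z) \le M_{2,r}(f-h_1)(z) + M_{2,r}\bigl(h_1 - h_1(z)\bigr)(z),
\]
so the whole proof reduces to bounding $M_{2,r}(h_1 - h_1(z))(z)$ in terms of $G_r(f)(z) + G_r(\bar f)(z)$.

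The key geometric step, which I expect to be the only nontrivial point, is the orthogonality estimate
\[
\bigl\| h - h(z) \bigr\|_{L^2(B(z,r))} \le \bigl\| h - g \bigr\|_{L^2(B(z,r))}
\]
valid for any $h$ holomorphic and $g$ antiholomorphic on $B(z,r)$. I would prove this by expanding $h(w) = \sum_\alpha a_\alpha (w-z)^\alpha$ and $g(w) = \sum_\beta b_\beta \overline{(w-z)^\beta}$ in Taylor series and invoking the invariance of $B(z,r)$ under the torus action $w - z \mapsto (e^{i\theta_1}(w_1 - z_1), \dots, e^{i\theta_n}(w_n - z_n))$: this forces $\int_{B(z,r)} (w-z)^\alpha \overline{(w-z)^\beta}\, dv = c_\alpha \delta_{\alpha \beta}$ and $\int_{B(z,r)} (w-z)^\gamma dv = 0$ for $\gamma \ne 0$. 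Expanding $|h - g|^2$ and using these orthogonality relations yields
\[
\|h - g\|_{L^2(B(z,r))}^2 = |a_0 - b_0|^2 |B(z,r)| + \sum_{\alpha \ne 0} |a_\alpha|^2 c_\alpha + \sum_{\beta \ne 0} |b_\beta|^2 c_\beta,
\]
and since $\|h - h(z)\|_{L^2(B(z,r))}^2 = \sum_{\alpha \ne 0} |a_\alpha|^2 c_\alpha$, the claimed inequality follows. Applying this to $h_1$ and $\bar h_2$ gives $M_{2,r}(h_1 - h_1(z))(z) \le M_{2,r}(h_1 - \bar h_2)(z) \le G_r(f)(z) + G_r(\bar f)(z) + 2\epsilon$. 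Sending $\epsilon \to 0$ and plugging back in produces the pointwise bound
\[
MO_{2,r}(f)(z) \lesssim G_r(f)(z) + G_r(\bar f)(z),
\]
and taking $L^p$ norms completes the proof together with the norm equivalence.
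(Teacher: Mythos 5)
Your proof is correct, and the overall structure is the same as the paper's: the trivial direction uses constants as holomorphic competitors in the infimum defining $G_r$, and the substantive direction reduces to the pointwise inequality $MO_{2,r}(f)(z) \lesssim G_r(f)(z) + G_r(\bar f)(z)$, after which one takes $L^p$ norms. Where you differ is in how that pointwise bound is established. The paper takes the exact minimizers $h_1 = Q_{z,r}(f)$, $h_2 = Q_{z,r}(\bar f)$ (the Bergman projections on the local ball), uses the constant $c(z) = \tfrac12\mathrm{Re}(h_1+h_2)(z) + \tfrac{\mathrm{i}}{2}\mathrm{Im}(h_1-h_2)(z)$ (which unwinds to $\tfrac12\bigl(h_1(z)+\overline{h_2(z)}\bigr)$), and then delegates the estimate $M_{2,r}(f-c(z))(z) \le C\bigl(G_r(f)(z)+G_r(\bar f)(z)\bigr)$ to the proof of Proposition 2.5 in \cite{HW18}. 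You instead pick the competitor constant $h_1(z)$ and prove from scratch the orthogonality inequality $\|h - h(z)\|_{L^2(B(z,r))} \le \|h - g\|_{L^2(B(z,r))}$ for $h$ holomorphic and $g$ antiholomorphic on the ball, via the torus invariance of $B(z,r)$ and the resulting orthogonality of the monomials $(w-z)^\alpha$ and $\overline{(w-z)^\beta}$. This is a clean, self-contained substitute for the step the paper outsources: the monomial expansion is legitimate since $f\in L^2_{\rm loc}$ forces the competing $h$ to lie in the local Bergman space, and your computation $\|h-g\|^2 = |a_0-b_0|^2|B| + \sum_{\alpha\ne 0}|a_\alpha|^2 c_\alpha + \sum_{\beta\ne 0}|b_\beta|^2 c_\beta$ is exactly right because $\int_{B} (w-z)^\gamma\, dv = 0$ for $\gamma\ne 0$ kills all holomorphic-times-holomorphic and antiholomorphic-times-antiholomorphic cross terms. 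The trade-off: the paper's route is shorter on the page (a citation), while yours gives the reader an elementary, quantitatively sharp (constant $1$) version of the key fact without reliance on \cite{HW18}, and uses near-minimizers rather than the exact local Bergman projection, which is marginally more flexible.
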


\begin{proof} The conclusion for $ 1< p\leq \infty$ is essentially  Proposition 2.5 in \cite{HW18}. As before, denote by $Q_{z, r}$ the Bergman projection of $L^2(B(z, r), dv)$ onto $A^2(B(z, r), dv)$.  If $f\in L^p_{\rm loc}(\Cn)$,  set $h_1= Q_{z, r}(f)$ and , $h_2= Q_{z, r}(\overline f)$. Then
$$
       \frac 1{|B(z, r)|}\int_{B(z, r)}\left| f_j-h_j \right|^2 dv = G_{r}(f_j)(z)^2,
$$
where $f_1 = f$ and $f_2=\bar f$. Set
$
	c(z)= \frac12{\rm  Re}(h_{1}+h_{2})(z)+\frac12{\rm i}\, {\rm  Im} (h_{1}-h_{2})(z).
$
As shown in the proof of Proposition 2.5 of \cite{HW18},
$$
	\left\{   \frac 1{|B(z, r)|}\int_{B(z, r)}\left|  f-c(z) \right|^2 dv \right\}^{\frac 12} \le C\left\{ G_{r}(f)(z)  +  G_{r}(\overline f)(z) \right\}.
$$
Hence,
\begin{align*}
	\frac 1{|B(z, r)|}\int_{B(z, r)}\left|  f-\frac 1{|B(z, r)|}\int_{B(z, r)} fdv \right|^2 dv
	&\le  \frac 1{|B(z, r)|}\int_{B(z, r)}\left|  f-c(z) \right|^2 dv\\
	&\le C \left( G_{r}(f)(z) + G_{r}(\overline f)(z) \right)^2.
\end{align*}
This implies, for $0<p\le \infty$,
$
	\|f\|_{\mathrm{IMO}^{p}}\le C\left\{  \|f\|_{\mathrm{IDA}^{p}}+  \|\overline f\|_{\mathrm{IDA}^{p}} \right\}.
$

The reverse inequality follows from the fact that  $ \|f\|_{\mathrm{IDA}^{p}}\le  \|f\|_{\mathrm{IMO}^{p}}$ since $ \|\overline f\|_{\mathrm{IDA}^{p}}\le  \|f\|_{\mathrm{IMO}^{p}}$ by definition.
\end{proof}

\begin{theorem} \label{Simul-S-p}
Let $0<p<\infty$ and suppose $\varphi\in C^2(\Cn)$ is real valued with   $\mathrm{i} \partial \overline{\partial} \varphi \simeq \omega_0$. Then for  $f\in \mathcal S$, the following statements are equivalent.
\begin{itemize}
\item[(A)] Both $H_f, H_{\overline f} \in {S_p}(F^2(\varphi), L^2(\varphi))$.

\item[(B)] $f\in \mathrm{IMO}^{p}$.
\end{itemize}
Furthermore,
\begin{equation}\label{simul-S-p-a}
	\|H_f\|_{S_p} +  \|H_{\overline f} \|_{S_p}\simeq \left\|   f \right\|_{\mathrm{IMO}^{p}}.
\end{equation}
\end{theorem}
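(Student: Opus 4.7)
The plan is to deduce this theorem directly by combining the two pieces of machinery already in place: the single-operator Schatten characterization of Theorem~\ref{S-p-criteria} and the identification of $\mathrm{IMO}^{p}$ as the intersection of $\mathrm{IDA}^{p}$ with its conjugate provided by Lemma~\ref{H-f-H-bar-f}. No new estimates should be required; the argument is essentially a two-step reduction plus a norm bookkeeping check.

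\textbf{Step 1: reduce to $\mathrm{IDA}^{p}$.} First I would observe that $\mathcal S$ is invariant under complex conjugation, since the defining condition $fg\in L^2(\varphi)$ for $g\in \Gamma$ depends only on $|f|$; hence $f\in \mathcal S$ implies $\overline{f}\in \mathcal S$, and $H_{\overline f}$ is densely defined on $F^2(\varphi)$. Applying Theorem~\ref{S-p-criteria} separately to the symbols $f$ and $\overline f$, we obtain
\[
    H_f\in S_p \iff f\in \mathrm{IDA}^{p}, \qquad H_{\overline f}\in S_p \iff \overline{f}\in \mathrm{IDA}^{p},
\]
together with the quantitative equivalences $\|H_f\|_{S_p}\simeq \|f\|_{\mathrm{IDA}^{p}}$ and $\|H_{\overline f}\|_{S_p}\simeq \|\overline{f}\|_{\mathrm{IDA}^{p}}$.

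\textbf{Step 2: translate to $\mathrm{IMO}^{p}$.} Now I would invoke Lemma~\ref{H-f-H-bar-f}, which gives
\[
    f\in \mathrm{IDA}^{p} \text{ and } \overline{f}\in \mathrm{IDA}^{p} \iff f\in \mathrm{IMO}^{p},
\]
together with the norm equivalence $\|f\|_{\mathrm{IDA}^{p}}+\|\overline f\|_{\mathrm{IDA}^{p}}\simeq \|f\|_{\mathrm{IMO}^{p}}$. Combining Steps 1 and 2 yields the equivalence (A)$\iff$(B), and chaining the two norm equivalences gives
\[
    \|H_f\|_{S_p}+\|H_{\overline f}\|_{S_p} \simeq \|f\|_{\mathrm{IDA}^{p}}+\|\overline f\|_{\mathrm{IDA}^{p}} \simeq \|f\|_{\mathrm{IMO}^{p}},
\]
which is exactly \eqref{simul-S-p-a}.

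\textbf{Where the difficulty (if any) sits.} There is no substantial new obstacle, since all real content has been absorbed into Theorem~\ref{S-p-criteria} and Lemma~\ref{H-f-H-bar-f}. The only points worth double-checking are that $\mathrm{IMO}^{p}\subset L^{2}_{\mathrm{loc}}$, so that the hypothesis of Lemma~\ref{H-f-H-bar-f} is met by symbols in $\mathcal S$, and that the IDA/IMO norms are independent of the chosen radius $r$ (Remark~\ref{independence}) so that the estimates in the two steps can be applied with a common $r$. Once those verifications are noted, the chain of implications and norm comparisons closes immediately.
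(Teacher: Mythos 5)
Your proposal is correct and takes essentially the same route as the paper, which likewise deduces Theorem~\ref{Simul-S-p} in one line by applying Theorem~\ref{S-p-criteria} to both $f$ and $\overline f$ and then invoking Lemma~\ref{H-f-H-bar-f}. The small verifications you flag (conjugation-invariance of $\mathcal S$, $\IMO^p\subset L^2_{\mathrm{loc}}$, and $r$-independence of the norms) are indeed the only points worth noting, and all hold by definition or by Remark~\ref{independence}.
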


\begin{proof}
Given $f\in \mathcal S$ and $0<p<\infty$, the equivalence between (A) and (B) together with the norm estimates \eqref{simul-S-p-a} follow from Theorem \ref{S-p-criteria} and Lemma~\ref{H-f-H-bar-f}.
\end{proof}

For Hankel operators acting on $F^2$, the conditions in \eqref{e:SD} and \eqref{e:SNI} are of course equivalent to (B), but notice that the latter equivalence can be proved directly.

\section{Proof of Theorem~\ref{Berger-Coburn Schatten}}\label{Berger-Coburn phenomenon}
In this section we prove the Berger-Coburn phenomenon for Schatten $p$-class Hankel operators when $1<p<\infty$. For this purpose we employ the Ahlfors-Beurling operator which is a well-known Calder\'{o}n-Zygmund operator on $L^p(\mathbb C)$, $1<p<\infty$, defined as follows
$$
	\mathfrak{T} (f)(z)= p. v.\, - \frac 1\pi \int_{\mathbb C} \frac {f(\xi)}{(\xi-z)^2} dv(\xi),
$$
where $p. v.$ means the Cauchy principal value. See \cite{Ah06} and \cite{AIM09} for further details.

\begin{lemma} \label{partial-derivatives} Suppose $1<p<\infty$. Then there is a constant $C$ depending only on $p$ such that, for $f\in C^2(\Cn)\cap L^\infty$ and $j=1, 2, \cdots, n$,
\begin{equation}\label{partial-dir}
	\left\| \frac {\partial f}{\partial z_j} \right\|_{L^p}
	\le C  \left\| \frac {\partial f}{\partial \overline z_j} \right\|_{L^p}.
\end{equation}
\end{lemma}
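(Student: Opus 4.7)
The plan is to reduce the inequality to the one-dimensional case via Fubini and then invoke the defining identity $\mathfrak T(\bar\partial u)=\partial u$ of the Ahlfors-Beurling operator together with its $L^p(\C)$-boundedness. Writing $z'_j=(z_1,\ldots,\widehat{z_j},\ldots,z_n)$, I freeze $z'_j$ and set $u(w)=f(z_1,\ldots,z_{j-1},w,z_{j+1},\ldots,z_n)$. Since $\partial f/\partial\bar z_j\in L^p(\Cn)$, Fubini gives $\partial u/\partial\bar w\in L^p(\C)$ for a.e.\ $z'_j$; a Fubini integration in $z'_j$ after the 1-variable inequality then delivers the lemma.

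For the 1-variable step I need to show: if $u\in C^2(\C)\cap L^\infty(\C)$ and $\partial u/\partial\bar w\in L^p(\C)$, then $\mathfrak T(\partial u/\partial\bar w)=\partial u/\partial w$ in the sense of distributions. Once this holds, the $L^p$-boundedness of $\mathfrak T$ immediately gives $\|\partial u/\partial w\|_{L^p}\le C_p\|\partial u/\partial\bar w\|_{L^p}$: since $\partial u/\partial w$ is a continuous function that coincides with an $L^p$ element as a distribution, it must itself lie in $L^p$ with the stated norm bound. The identity is classical for $u\in C_c^\infty(\C)$, so the real task is to upgrade it to bounded, not necessarily compactly supported, $u$.

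My preferred route for the upgrade is duality against a test function $\phi\in C_c^\infty(\C)$. Because the kernel $-1/(\pi w^2)$ is even in $w$, $\mathfrak T$ is self-transpose under the real pairing $\int gh\,dv$, giving $\int\mathfrak T(\bar\partial u)\,\phi\,dv=\int(\bar\partial u)\,\mathfrak T(\phi)\,dv$. Expanding the kernel for large argument shows $\mathfrak T(\phi)(w)=O(|w|^{-2})$ as $|w|\to\infty$, so $u\cdot\mathfrak T(\phi)\in L^1(\C)$ and the boundary contributions on $\partial B(0,R)$ are of order $R\cdot R^{-2}\to 0$, which legitimizes integrating $\bar\partial$ by parts onto $u$. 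Since $\mathfrak T$ is translation invariant it commutes with $\bar\partial$, so $\bar\partial\mathfrak T(\phi)=\mathfrak T(\bar\partial\phi)=\partial\phi$; a further trivial integration by parts (valid because $\phi\in C_c^\infty$ and $u\in C^1$) collapses the chain to $\int(\partial u)\,\phi\,dv$, establishing the desired distributional identity.

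The main obstacle is precisely this upgrade. The textbook form of the Ahlfors-Beurling identity demands compact support, and the naive substitute via a cutoff $u\mapsto\chi_R u$ introduces boundary errors of size $\|u\|_\infty R^{2/p-1}$, which fail to vanish as $R\to\infty$ when $p\le 2$; thus a direct limiting argument only works for $p>2$. The duality route above sidesteps the obstruction because the cutoff is transferred onto the smooth, fast-decaying companion $\mathfrak T(\phi)$, whose boundary contributions are genuinely negligible for every $p\in(1,\infty)$.
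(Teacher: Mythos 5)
Your duality argument is a genuinely different route from the paper's and is essentially sound; the Fubini reduction to $n=1$ is the same as the paper's. Where you pass the cutoff onto the test function $\phi$, the paper instead truncates $f$ to $f_R=f\psi_R$ with a smooth annular cutoff, applies the compactly supported Cauchy--Pompeiu/Ahlfors--Beurling identity to $f_R$, and then estimates the error term $\mathfrak{T}\bigl(f\,\overline\partial\psi_R\bigr)$ \emph{pointwise} on a fixed ball $D(0,r)$, getting an $O\bigl(R\|f\|_{L^\infty}/(R-r)^2\bigr)=O(1/R)$ bound there, uniformly in $z\in D(0,r)$. Hence $\|\mathfrak{T}(f\,\overline\partial\psi_R)\|_{L^p(D(0,r))}\to 0$ as $R\to\infty$ for \emph{every} $p\in(1,\infty)$, and one lets $r\to\infty$ afterwards. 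Your remark that ``a direct limiting argument only works for $p>2$'' is therefore not accurate: the $R^{2/p-1}$ loss you describe arises only if one bounds the error in $L^p(\C)$ globally via $\|\mathfrak{T}(f\,\overline\partial\psi_R)\|_{L^p}\lesssim\|f\,\overline\partial\psi_R\|_{L^p}$; the paper sidesteps this by localising where the $L^p$ norm is taken rather than altering the cutoff. Both methods work for all $1<p<\infty$; your duality version is cleaner in that it avoids the two-parameter limit $R\to\infty$ then $r\to\infty$.

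One small inaccuracy in your write-up: $\mathfrak{T}(\phi)(w)=O(|w|^{-2})$ does \emph{not} give $u\cdot\mathfrak{T}(\phi)\in L^1(\C)$, since $|w|^{-2}$ fails to be integrable near infinity in $\R^2$ (the decay improves to $O(|w|^{-3})$ only when $\int\phi\,dv=0$). Fortunately your argument never actually uses this: after integrating $\overline\partial$ by parts over $B(0,R)$, the two volume terms $(\overline\partial u)\,\mathfrak{T}(\phi)$ and $u\,\overline\partial\mathfrak{T}(\phi)=u\,\partial\phi$ are in $L^1$ (the first by H\"older with $\overline\partial u\in L^p$, $\mathfrak{T}(\phi)\in L^{p'}$; the second because $\partial\phi$ has compact support), and the boundary term is $O(R\cdot R^{-2})\to 0$ as you correctly note. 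So the distributional identity $\mathfrak{T}(\overline\partial u)=\partial u$ does hold for $u\in C^2(\C)\cap L^\infty$ with $\overline\partial u\in L^p$, and the one-dimensional estimate follows from the $L^p$-boundedness of $\mathfrak{T}$ exactly as you state.
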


\begin{proof}
We take $n=1$ temporarily. Let $f\in C^2(\mathbb C)\cap L^\infty$. If $\left\| \frac {\partial f}{\partial \overline z_j} \right\|_{L^p}=0$, then $f\in H(\mathbb C)\cap L^\infty$, which implies $f$ is constant and the estimates   in \eqref{partial-dir} follow. So we suppose $\left\| \frac {\partial f}{\partial \overline z_j} \right\|_{L^p}>0$. Take $\psi(r)\in C^\infty(\mathbb R)$ to be decreasing such that $\psi(x)=1$ for $x\le 0$, $\psi(x)=0$ for $x\ge 1$, and $0\le -\psi'(x)\le 2$ for $x\in  \mathbb R$.
For $R>0$ fixed, set $\psi_R(x)= \psi(x-R)$. Now for $f\in C^2(\Cn)\cap L^\infty$, define
$ f_R(z)=  f(z) \psi_R(|z|)$. It is trivial that $ f_R(z) \in C^2_c(\mathbb C)$,  the set of $C^2$ functions on $\mathbb R^2$   with compact support. From Theorem 2.1.1 in \cite{CS} we have
$$
	f_R (z)= \frac 1{2\pi \mathrm i} \int_{\mathbb C} \frac {\frac {\partial f_R}{\partial \overline z}}{\xi-z} d\xi\wedge d\overline \xi.
$$
Notice that $ \frac {\partial f_R}{\partial \overline z} = \psi_R \frac {\partial  f}{\partial \overline z}+ f \frac {\partial  \psi_R}{\partial \overline z} $.   By Lemma 2 on page 52 in~\cite{Ah06}, we get
\begin{equation}\label{partial-estimate-z}
	\frac {\partial f_R}{\partial z}(z)
	= \mathfrak{T}\left( \frac {\partial f_R}{\partial \overline z}\right)(z)
	= \mathfrak{T}\left(\psi_R \frac {\partial  f}{\partial \overline z}\right)(z)+ \mathfrak{T}\left(f \frac {\partial  \psi_R}{\partial \overline z}\right)(z).
\end{equation}
Now for $r>0$ and $|z|<r$, when $R$ is sufficiently large, it holds that
$$
	\mathfrak{T}\left(f \frac {\partial  \psi_R}{\partial \overline z}\right)(z) \le \frac{\|f\|_{L^\infty} } {\pi(R-r)^2}  \int_{R\le |\xi|\le R+1}   dv(\xi)\le  \frac{3R \|f\|_{L^\infty} } { (R-r)^2},
$$
and hence
\begin{equation}\label{partial-deri-a}
	\left\|\mathfrak{T}\left(f \frac {\partial  \psi_R}{\partial \overline z}\right)\right\|_{L^p(D(0, r), dv)}
	\le  \left\| \frac {\partial f}{\partial \overline z} \right\|_{L^p}.
\end{equation}
On the other hand, by the boundeness of $\mathfrak{T}$ on $L^p$ (see for example, Theorem 4.5.3 in \cite{AIM09}, or
the estimate (11) on page 53 in \cite{Ah06}), we get
\begin{equation}\label{partial-deri-b}
	\left\| \mathfrak{T}\left(\psi_R \frac {\partial  f}{\partial \overline z}\right)\right\|_{L^p}
	\le C \left\|  \psi_R \frac {\partial  f}{\partial \overline z} \right\|_{L^p}
	\le C \left\|\frac {\partial  f}{\partial \overline z} \right\|_{L^p}.
\end{equation}
From (\ref{partial-estimate-z}), (\ref{partial-deri-a}) and (\ref{partial-deri-b}) we obtain
$$
	\left\|  \frac {\partial f}{\partial z} \right\|_{L^p(D(0, r), dv)}
	= \left\|  \frac {\partial f_R}{\partial z} \right\|_{L^p(D(0, r), dv)}
	\le C \left\|\frac {\partial  f}{\partial \overline z} \right\|_{L^p}.
$$
Therefore,
\begin{equation}\label{derivative-est}
	\left\|  \frac {\partial f}{\partial z} \right\|_{L^p}
	\le C \left\|\frac {\partial  f}{\partial \overline z} \right\|_{L^p}.
\end{equation}

Now for $n\ge 2$ and $f\in L^\infty\cap C^2(\Cn)$, from (\ref{derivative-est})
\begin{align*}
	\int_{\Cn} \left|\frac {\partial f}{\partial z_1}(\xi)\right|^p dv(\xi)
	&=\int_{{\mathbb C}^{n-1}} dv(\xi') \int_{\mathbb C} \left|  \frac {\partial f}{\partial z_1}(\xi_1, \xi')\right|^p dv(\xi_1) \\
	&\le C  \int_{{\mathbb C}^{n-1}} dv(\xi') \int_{\mathbb C} \left| \frac {\partial f}{\partial \overline z_1}(\xi_1, \xi')\right|^p dv(\xi_1).
\end{align*}
This implies (\ref{partial-dir}) for $j=1$.  Similarly, we have (\ref{partial-dir}) for $j=2, \ldots, n$, which completes the proof.
\end{proof}

\begin{proof}[Proof of Theorem~\ref{Berger-Coburn Schatten}]
Suppose $1<p<\infty$ and $H_f\in S_p$. By Theorem \ref {S-p-criteria}, we have
$$
 	\|f\|_{ \mathrm{IDA}^{p}} \simeq \|H_f\|_{S_p}<\infty.
$$
We decompose $f=f_1+f_2$ as in \eqref{decomp-a}. Then, since $M_{2, r}(\overline {f_2}) = M_{2, r}( {f_2})\in L^p$, we have $H_{\overline f_2}\in S_p$ and
\begin{equation}\label{f-2-Sp-norm}
\|H_{\overline f_2} \|_{S_p} \le C \| M_{2, r}( {f_2})\|_{L^p}\le C \|f\|_{\mathrm{IDA}^{p}}.
\end{equation}
In addition, since $f\in L^\infty$, as in (5.3) of~\cite{HV20}, we may assume
$$
	\|f_1\|_{ L^\infty}\le C \|f\|_{ L^\infty},
$$
where the constant $C$ is independent of $f$. We now apply Lemma~\ref{partial-derivatives} to obtain
$$
	\left\| \frac {\partial \overline {f}_1}{\partial \overline {z}_j} \right\|_{L^p}
	=\left\| \frac {\partial f_1}{\partial z_j} \right\|_{L^p}
	\le C  \left\| \frac {\partial f_1}{\partial \overline z_j} \right\|_{L^p}.
$$
This and (\ref{H-f1-e}) yield
\begin{equation}\label{f-2-Sp-norm-m}
 \|H_{{\overline f}_1 }\|_{S_p}\le C \|\overline \partial\, \overline f_1 \|_{L^p}\le C  \|\overline \partial f_1 \|_{L^p}\le C \|f\|_{\mathrm{IDA}^{p}}.
\end{equation}
It follows from (\ref{f-2-Sp-norm}), (\ref{f-2-Sp-norm-m}) and Theorem \ref {S-p-criteria} that
$$
	\|H_{\overline f}\|_{S_p}\le C \|f\|_{\mathrm{IDA}^{p}} \le C   \|H_{f}\|_{S_p},
$$
which completes the proof.
\end{proof}

\section*{Acknowledgments}

\noindent The authors thank the referees for many useful suggestions.

\end{document}